\newtheorem{theorem}{Theorem}[section]
\newtheorem{corollary}[theorem]{Corollary}
\newtheorem{proposition}[theorem]{Proposition}
\def\11{\textbf{$1$}}
\begin{document}

\title[Tingley's problem for von Neumann algebras]{On the extension of isometries between the unit spheres of von Neumann algebras}

\author[F.J. Fern\'{a}ndez-Polo]{Francisco J. Fern\'{a}ndez-Polo}
\address{Departamento de An{\'a}lisis Matem{\'a}tico, Facultad de
Ciencias, Universidad de Granada, 18071 Granada, Spain.}
\email{pacopolo@ugr.es}

\author[A.M. Peralta]{Antonio M. Peralta}

\address{Departamento de An{\'a}lisis Matem{\'a}tico, Facultad de
Ciencias, Universidad de Granada, 18071 Granada, Spain.}
\email{aperalta@ugr.es}


\subjclass[2010]{Primary 47B49, Secondary 46A22, 46B20, 46B04, 46A16, 46E40.}

\keywords{Tingley's problem; extension of isometries; von Neumann algebra}

\date{}

\begin{abstract} We prove that every surjective isometry between the unit spheres of two von Neumann algebras admits a unique extension to a surjective real linear isometry between these two algebras.
\end{abstract}

\maketitle
\thispagestyle{empty}

\section{Introduction}

Since 1987, many researchers have been struggling to provide a positive answer to the so-called Tingley's problem. Accordingly to the usual notation, the unit sphere of a Banach $X$ space will be denoted by $S(X)$, while the symbol $\mathcal{B}_X$ will stand for the closed unit ball of $X$. Thirty years ago D. Tingley showed that for any two finite dimensional Banach spaces $X$ and $Y$, every surjective isometry $f:S(X)\to S(Y)$ satisfies $f(-x) = -f(x)$ for every $x\in S(X)$ (see \cite[THEOREM in page 377]{Ting1987}). What is nowadays called \emph{Tingley's problem} asks for a stronger conclusion. It can be stated as follows: Suppose $f: S(X)\to S(Y)$ is a surjective isometry, where $X$ and $Y$ are two arbitrary Banach spaces. Does $f$ admit an extension to a surjective real linear isometry $T: X\to Y$?\smallskip

The surveys \cite{Ding2009} and \cite{YangZhao2014} contain a good exposition on Tingley's problem for some classical Banach spaces. Beside these references, we observe that positive answers to Tingley's problem have been established in the following cases: \begin{enumerate}[$\checkmark$]\item $X$ and $Y$ are $\ell^p (\Gamma)$ spaces with $1\leq p\leq \infty$ (G.G. Ding \cite{Ding2002,Di:p,Di:8} and \cite{Di:1});
\item $X$ and $Y$ are $L^{p}(\Omega, \Sigma, \mu)$ spaces, where $(\Omega, \Sigma, \mu)$ is a $\sigma$-finite measure space and $1\leq p\leq \infty$ (D. Tan \cite{Ta:8, Ta:1} and \cite{Ta:p}); \item $X$ and $Y$ are $C_0(L)$ spaces (R.S. Wang \cite{Wang});
\item $X$ and $Y$ are finite dimensional polyhedral Banach spaces (V. Kadets and M. Mart{\'i}n \cite{KadMar2012});
\item $X$ and $Y$ are finite von Neumann algebras (R. Tanaka \cite{Tan2017b});
\item $X$ and $Y$ are $K(H)$ spaces or compact C$^*$-algebras, where $K(H)$ stands for the space of all compact operators on a complex Hilbert space $H$ (A.M. Peralta and R. Tanaka \cite{PeTan16});
\item $X$ and $Y$ are weakly compact JB$^*$-triples (F.J. Fern{\'a}ndez-Polo and A.M. Peralta \cite{FerPe17});
\item $X$ and $Y$ are spaces of trace class operators on complex Hilbert spaces (F.J. Fern{\'a}ndez-Polo, J.J. Garc{\'e}s, A.M. Peralta and I. Villanueva \cite{FerGarPeVill17});
\item $X$ and $Y$ are $\ell_{\infty}$-sums of $B(H)$ spaces (i.e. atomic von Neumann algebras), where $B(H)$ is the space of all bounded linear operators on a complex Hilbert space, or more generally, atomic JBW$^*$-triples (F.J. Fern{\'a}ndez-Polo and A.M. Peralta \cite{FerPe17b,FerPe17c}).
\end{enumerate}

One of the most interesting questions that remains open in this line is whether Tingley's problem admits a positive answer in the case in which $X$ and $Y$ are general von Neumann algebras. This question has been intriguing recent contributors, and serious difficulties appear because, as we shall see later in detail, the techniques developed in the references dealing with Tingley's problem for C$^*$-algebras (i.e. \cite{Tan2017b,PeTan16,FerPe17,FerPe17b} and \cite{FerPe17c}) are not valid for general von Neumann algebras. This paper is devoted to study this problem. We establish a complete positive answer to Tingley's problem in the case of surjective isometries between the unit spheres of two arbitrary von Neumann algebras (see Theorem \ref{t Tingley von Neumann}). The concrete result proves that if $f : S(M)\to S(N)$ is a surjective isometry between the unit spheres of two von Neumann algebras, then there is a central projection $p$ in $N$ and a Jordan $^*$-isomorphism $J : M\to M$ such that the mapping $T : M\to N,$ $T(x) = f(1) \left( p J (x) + (1-p) J (x)^*\right)$ {\rm(}$x\in M${\rm)} is a surjective real linear isometry and $T|_{S(M)} = f$.\smallskip

As in the study of Tingley's problem for $K(H)$ and $B(H)$ spaces, atomic von Neumann algebras and JBW$^*$-triples, the starting point of our arguments is a geometric property asserting that a surjective isometry between the unit spheres of two Banach spaces $X$ and $Y$ preserves maximal convex sets of the corresponding spheres (\cite[Lemma 5.1$(ii)$]{ChenDong2011}, \cite[Lemma 3.5]{Tan2014}), in order to apply the description of maximal norm closed faces of the closed unit ball of a C$^*$-algebra. The difficulties appear in the fact that, for two arbitrary von Neumann algebras $M$ and $N$, the maximal norm closed faces of $\mathcal{B}_{M}$ and $\mathcal{B}_{N}$ are given by minimal partial isometries in $M^{**}$ and $N^{**}$, respectively. However, a surjective isometry $f:S(A)\to S(B)$ is not, a priori, defined on the unit sphere of $M^{**}$. When $M$ and $N$ coincide with $B(H)$, one of the main results in \cite{FerPe17b} shows that each norm closed face of $\mathcal{B}_{B(H)}$ associated to a non-zero partial isometry in $B(H)$ is mapped by $f$ to a norm closed face of $\mathcal{B}_{B(H)}$ associated to another non-zero partial isometry in $B(H).$ In this paper we improve the results known in this direction by showing that every surjective isometry $f: S(A) \to S(B)$ between the unit spheres of two C$^*$-algebras maps non-zero partial isometries in $A$ to partial isometries in $B$. Furthermore, the norm closed face of $\mathcal{B}_{A}$ associated to a non-zero partial isometry $e$ in $A$ is mapped by $f$ to the norm closed face of $\mathcal{B}_{B}$ associated to $f(e)$ in $B$ (see Theorem \ref{t A}).\smallskip

In the proof of \cite[Theorem 4.12]{Tan2017b} R. Tanaka observed that a surjective isometry between the unit spheres of two finite von Neumann algebras maps unitary elements to unitary elements. After a series of technical results, we establish in Theorem \ref{t Tanaka unitaries general vN} a generalization of this fact by showing that a surjective isometry between the unit spheres of two arbitrary von Neumann algebras preserves unitary elements. This result is later combined with an outstanding result due to O. Hatori and L. Moln{\'a}r asserting that every surjective isometry between the unitary groups of two von Neumann algebras extends to a surjective real linear isometry between the algebras \cite[Corollary 3]{HatMol2014}. Finally, our arguments are culminated with an appropriate application of the theory of \emph{convex combinations of unitary operators in von Neumann algebras} developed by C.L. Olsen and G.K. Pedersen in \cite{OlsPed1986}.

\section{New properties derived from the facial structure}

A common starting point in most of the studies on Tingley's problem is a geometric property asserting that a surjective isometry between the unit spheres of two Banach spaces $X$ and $Y$ preserves maximal convex sets of the corresponding spheres (\cite[Lemma 5.1$(ii)$]{ChenDong2011}, \cite[Lemma 3.5]{Tan2014}). Furthermore, R. Tanaka proves that a surjective as above actually preserves maximal proper norm closed faces of the corresponding closed unit balls of the spaces (compare \cite{Tan2016}).\smallskip

In a recent contribution, J.J. Garc{\'e}s, I. Villanueva and the authors of this note establish a stronger version of the above facts for surjective isometries between the unit spheres of C$^*$-algebras.

\begin{theorem}\label{t all faces von Neumann}\cite{FerGarPeVill17} Let $f: S(A) \to S(B)$ be a surjective isometry between the unit spheres of two C$^*$-algebras. Then the following statements hold:
\begin{enumerate}[$(a)$]\item Let $\mathcal{F}$ be a convex set in $S(A)$. Then $\mathcal{F}$ is a norm closed face of $\mathcal{B}_{A}$ if and only if $f(\mathcal{F})$ is a norm closed face of $\mathcal{B}_{B}$;
\item Given $e\in S(A)$, we have that $e\in \partial_e (\mathcal{B}_A)$ if and only if $f(e)\in \partial_e (\mathcal{B}_B)$.
\end{enumerate}\end{theorem}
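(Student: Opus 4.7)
The plan is to bootstrap from Tanaka's known preservation of maximal proper norm closed faces (cf.\ \cite{Tan2016}) to \emph{all} norm closed faces, by invoking the lattice structure of the norm closed faces of $\mathcal{B}_A$ for a C$^*$-algebra.

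The central structural input I would use is the Akemann--Pedersen--Edwards--R\"uttimann classification of norm closed faces of $\mathcal{B}_A$: the proper norm closed faces of $\mathcal{B}_A$ correspond bijectively (in an order-reversing way) to a distinguished class of partial isometries in $A^{**}$ (the ``compact'' or ``closed'' ones), with maximal proper norm closed faces corresponding precisely to the minimal non-zero partial isometries. The key lattice fact I would then extract is that every proper norm closed face $\mathcal{F}$ of $\mathcal{B}_A$ can be written as an intersection
\[
\mathcal{F} = \bigcap_{\alpha} M_\alpha
\]
of the maximal proper norm closed faces $M_\alpha$ containing it --- equivalently, every closed partial isometry in $A^{**}$ is the supremum of the minimal partial isometries it dominates.

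Granted this, part $(a)$ becomes essentially formal. For a norm closed face $\mathcal{F} \subseteq S(A)$ of $\mathcal{B}_A$, write $\mathcal{F} = \bigcap_\alpha M_\alpha$ as above. By Tanaka's theorem each $f(M_\alpha)$ is a maximal proper norm closed face of $\mathcal{B}_B$. Because $f$ is a bijection between the spheres one has $f(\mathcal{F}) = \bigcap_\alpha f(M_\alpha)$, and an arbitrary intersection of norm closed faces is itself a norm closed face; in particular $f(\mathcal{F})$ is automatically convex, which is the only nontrivial point. Applying the same argument to $f^{-1}$ yields the converse direction. For part $(b)$ I would use the elementary equivalence that $e \in \partial_e(\mathcal{B}_A)$ if and only if $\{e\}$ is itself a norm closed face of $\mathcal{B}_A$; part $(a)$ then gives that $\{f(e)\} = f(\{e\})$ is a norm closed face of $\mathcal{B}_B$, which is equivalent to $f(e) \in \partial_e(\mathcal{B}_B)$, and symmetrically.

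The main obstacle is justifying the intersection property of faces invoked above. This requires the full Akemann--Pedersen classification of norm closed faces of $\mathcal{B}_A$ in terms of partial isometries in $A^{**}$, together with the Edwards--R\"uttimann lattice theory, and the fact that the atomic part of $A^{**}$ is ``rich enough'' to witness every closed partial isometry as a supremum of minimal ones below it. With this structural bridge in hand, the rest of the argument reduces to a one-line manipulation of the lattice of faces under the bijection $f$, combined with Tanaka's preservation of maximal proper faces.
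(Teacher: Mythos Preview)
Your strategy is sound, but note that the paper does not actually give a proof here: it simply cites \cite[Corollary 2.5]{FerGarPeVill17} and the surrounding discussion. So there is no in-paper argument to compare against beyond that reference.

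That said, your sketch is a valid independent route to the result, and it is worth recording why. The lattice fact you isolate --- that every proper norm closed face $F_v$ of $\mathcal{B}_A$ coincides with the intersection $\bigcap\{F_w : w \hbox{ minimal in } A^{**},\ w\leq v\}$ of the maximal proper faces containing it --- does hold, and follows from Krein--Milman applied to the weak$^*$ closed face $\{v\}_{'}\subseteq \mathcal{B}_{A^*}$: its extreme points are exactly the functionals $\varphi_w$ supported at minimal $w\leq v$, and for $x\in A$ the map $\varphi\mapsto\varphi(x)$ is weak$^*$ continuous and affine, so $x\in F_v=\{v\}_{''}$ if and only if $\varphi_w(x)=1$ for every such $w$, i.e.\ $x\in\bigcap_w F_w$. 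This is precisely the mechanism behind Proposition~\ref{p technical} of the present paper (established there for later, independent use), so your ``main obstacle'' is genuinely surmountable with the available structural theory. Once that is in hand, your reduction to Tanaka's preservation of maximal faces plus the bijection identity $f\big(\bigcap_\alpha M_\alpha\big)=\bigcap_\alpha f(M_\alpha)$ is clean, and part $(b)$ follows from $(a)$ exactly as you say via the equivalence $e\in\partial_e(\mathcal{B}_A)\Leftrightarrow \{e\}$ is a norm closed face.

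One terminological correction: the partial isometries in $A^{**}$ parametrizing norm closed faces of $\mathcal{B}_A$ are the \emph{compact} ones (those belonging locally to $A$ in the sense of Akemann--Pedersen), not merely the ``closed'' ones as you write; the distinction matters, since closedness alone does not guarantee that $F_v\cap A$ is nonempty.
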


\begin{proof} The statement follows from Corollary 2.5 in \cite{FerGarPeVill17} and the comments preceding and following it.
\end{proof}

Motivated by the previous result it worth to spend few paragraphs in reviewing the facial structure of the closed unit ball of a C$^*$-algebra. For this purpose we borrow the next nine paragraphs from the review done in \cite{FerPe17b}.\smallskip

Let $a$ and $b$ be two elements in a C$^*$-algebra $A$. We recall that $a$ and $b$ are orthogonal ($a\perp b$ in short) if $ ab^* = b^* a =0$. Symmetric elements in $A$ are orthogonal if and only if their product is zero. It is known that $$\|a+ b\| =\max\{\|a\|,\|b\| \},$$ for every $a,b\in A$ with $a\perp b$. Given a subset $S\subseteq A$ the symbol $S^{\perp}$ will stand for the set of all elements in $A$ which are orthogonal to every element in $S$, that is, $S^{\perp} :=\{ x\in A : x\perp a, \hbox{ for all } a\in S\}$. \smallskip

For each element $a$ in a C$^*$-algebra $A$, the symbol $|a|$ will denote the element $(a^* a)^{\frac12}\in A$. Throughout this article, for each $x\in A$, $\sigma (x)$ will denote the spectrum of the element $x$. We observe that $\sigma(|a|)\cup \{0\} = \sigma(|a^*|)\cup \{0\}$, for every $a\in A$. Let $a = v |a|$ be the polar decomposition of $a$ in $A^{**}$, where $v$ is a partial isometry in $A^{**}$, which, in general, does not belong to $A$ (compare \cite{S}). It is further known that $v^*v$ is the range projection of $|a|$ ($r(|a|)$ in short), and for each $h\in C(\sigma(|a|)),$ with $h(0)=0$ the element $v h(|a|)\in A$ (see \cite[Lemma 2.1]{AkPed77}).\smallskip

A complete study of the facial structure of the closed unit ball of a C$^*$-algebra was conducted by C.A. Akemann and G.K. Pedersen in \cite{AkPed92}. When $A$ is a von Neumann algebra, weak$^*$ closed faces in $\mathcal{B}_A$ were originally determined by C.M. Edwards and G.T. R\"{u}ttimann in \cite{EdRutt88}, who proved that general weak$^*$ closed faces in $\mathcal{B}_A$ have the form $$F_v=v + (1 - vv^*) \mathcal{B}_{A} (1 - v^*v) = \{x\in \mathcal{B}_A:\ xv^* = vv^*\},$$ for some partial isometry $v$ in $A$. Actually, the mapping $v\mapsto F_v$ is an anti-order isomorphism from the complete lattice of partial isometries in $A$ onto the complete lattice of weak$^*$ closed faces of $\mathcal{B}_A$, where the partial order in the set of partial isometries of $A$ is given by $v\leq u$ if and only if $u = v + (1-vv^*) u (1-v^*v)$ (see \cite[Theorem 4.6]{EdRutt88} or \cite[\S 5]{AkPed92}).\smallskip

However, partial isometries in a general C$^*$-algebra $A$ are not enough to determine all norm closed faces in $\mathcal{B}_A$, even more after recalling the existence of C$^*$-algebras containing no partial isometries. In the general case, certain partial isometries in the second dual $A^{**}$ are required to determine the facial structure of $\mathcal{B}_A$. We recall that a projection $p$ in $A^{**}$ is called \emph{open} if $A\cap (p A^{**} p)$ is weak$^*$ dense in $p A^{**} p$, equivalently there exists an increasing net of positive elements in $A$, all of them bounded by $p$, converging to $p$ in the strong$^*$ topology of $A^{**}$ (see \cite[\S 3.11]{Ped}, \cite[\S III.6 and Corollary III.6.20]{Tak}). A projection $p \in A^{**}$ is said to be \emph{closed} if $1-p$ is open. A closed projection $p$ in $A^{**}$ is \emph{compact} if $p\leq x$ for some norm-one positive element $x \in A$. A partial isometry $v\in A^{**}$ \emph{belongs locally to $A$} if $v^*v$ is a compact projection and there exists a norm-one element $x$ in $A$ satisfying $v = x v^*v$ (compare \cite[Remark 4.7]{AkPed92}). It was shown by C.A. Akemann and G.K. Pedersen that a partial isometry $v$ in $A^{**}$ belongs locally to $A$ if and only if $v^*$ belongs locally to $A$ (see \cite[Lemma 4.8]{AkPed92}).\smallskip

It is shown in \cite[Lemma 4.8 and Remark 4.11]{AkPed92} that ``the partial isometries that belong locally to $A$ are obtained by taking an element $x$ in $A$ with norm 1 and polar decomposition $x = u |x|$ (in $A^{**}$), and then letting $v = ue$ for some compact projection $e$ contained in the spectral projection $\chi_{_{\{1\}}}(|x|)$ of $|x|$ corresponding to the eigenvalue 1.''\smallskip

It should be noted that a partial isometry $v$ in $A^{**}$ belongs locally to $A$ if and only if it is compact in the sense introduced by C.M. Edwards and G.T. R\"{u}ttimann in \cite[Theorem 5.1]{EdRu96}. In this note, we shall mainly use the term compact to refer to those partial isometries in $A^{**}$ belonging locally to $A$. \smallskip

In order to equip the reader with a complete background, we recall a basic tool in describing the facial structure of the closed unit ball $\mathcal{B}_X$ in a complex Banach space $X$. For each $F\subseteq \mathcal{B}_X$  and $G\subseteq \mathcal{B}_{X^*}$, we define
$$ F^{\prime} = \{a \in \mathcal{B}_{X^*} :a(x) = 1\,\, \forall x \in F\},\quad
G_{\prime} = \{x \in \mathcal{B}_X : a(x) = 1\,\, \forall a \in G\}.$$ Then,
$F^{\prime}$ is a weak$^*$ closed face of $\mathcal{B}_{X^*}$ and $G_{\prime}$
is a norm closed face of $\mathcal{B}_X$. Given a convex
set $\mathcal{C}$ we denote by $\partial_{e} (\mathcal{C})$ the set of all extreme points in $\mathcal{C}$.\smallskip

We can now state the result determining the facial structure of the unit ball of a C$^*$-algebra.

\begin{theorem}\label{t faces AkPed}\cite[Theorems 4.10 and 4.11]{AkPed92} Let $A$ be a C$^*$-algebra. The norm closed faces of the unit ball of $A$ have the form $$F_v=\{v\}_{_{''}}=\left(v + (1 - vv^*) \mathcal{B}_{A^{**}} (1 - v^*v)\right)\cap \mathcal{B}_{A} = \{x\in \mathcal{B}_A:\ xv^* = vv^*\},$$ for some partial isometry $v$ in $A^{**}$ belonging locally to $A$ . Actually, the mapping $v\mapsto F_v$ is an anti-order isomorphism from the complete lattice of partial isometries in $A^{**}$ belonging locally to $A$ onto the complete lattice of norm closed faces of $\mathcal{B}_A$. Furthermore, for each weak$^*$ closed face $\mathcal{G}$ of the closed unit ball of $A^{*}$ there exists a unique partial isometry $v$ in $A^{**}$ belonging locally to $A$ such that $\mathcal{G} = \{v\}_{_{'}}$.$\hfill\Box$
\end{theorem}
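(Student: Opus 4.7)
The plan is to reduce the description of norm closed faces of $\mathcal{B}_A$ to the already established Edwards--R\"uttimann classification of weak$^*$ closed faces of $\mathcal{B}_{A^{**}}$ that is recalled just before the theorem. Given a proper norm closed face $F$ of $\mathcal{B}_A$, every face of the unit ball of a Banach space is supported by a norm-one functional, so $F \subseteq \{x \in \mathcal{B}_A : \phi(x)=1\}$ for some $\phi \in S(A^*)$. Since $\phi$ extends to a weak$^*$ continuous functional on $A^{**}$, the weak$^*$ closure $\widetilde{F} := \overline{F}^{\,w^*} \subseteq \mathcal{B}_{A^{**}}$ is a weak$^*$ closed proper face of the unit ball of the enveloping von Neumann algebra. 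Applying the Edwards--R\"uttimann description, one obtains
$$\widetilde{F} \;=\; F_v \;=\; v + (1-vv^*)\,\mathcal{B}_{A^{**}}\,(1-v^*v)$$
for a unique partial isometry $v \in A^{**}$, and $F = \widetilde{F} \cap \mathcal{B}_A$.

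The heart of the argument is to show that this $v$ belongs locally to $A$. Pick any $x \in F$ (non-empty since $F$ is a proper face) and write its polar decomposition $x = u|x|$ in $A^{**}$. The defining face equation $x v^* = vv^*$, combined with $\|x\|=1$ and the fact that $v$ is the unique partial isometry representing $\widetilde{F}$, forces $v = u\,\chi_{_{\{1\}}}(|x|)$, so that $v^*v = \chi_{_{\{1\}}}(|x|)$. Since this spectral projection is dominated by the norm-one positive element $|x| \in A$, it is a compact projection in the Akemann--Pedersen sense, and the identity $v = x\,\chi_{_{\{1\}}}(|x|) = x\,v^*v$ exhibits $v$ as a partial isometry belonging locally to $A$.

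For the converse direction, if $v \in A^{**}$ is a partial isometry belonging locally to $A$, the Akemann--Pedersen local characterisation quoted in the paragraph preceding the theorem gives $v = u\,\chi_{_{\{1\}}}(|x|)$ for some norm-one $x \in A$ with polar decomposition $x = u|x|$; this $x$ lies in $F_v \cap \mathcal{B}_A$, so the latter is a non-empty, norm closed convex subset of $S(A)$, and the exposing condition $xv^* = vv^*$ makes it a face. Injectivity of $v \mapsto F_v$ follows from the uniqueness in the Edwards--R\"uttimann classification applied to the weak$^*$ closure, and the order-reversing property reduces to a direct comparison of initial/final projections using the relation $u \le v \Longleftrightarrow u = v + (1-vv^*)u(1-v^*v)$. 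Finally, the bijection with weak$^*$ closed faces of $\mathcal{B}_{A^*}$ is a bipolar-type statement: the operations $F \mapsto F'$ and $G \mapsto G_\prime$ are shown to be mutually inverse by verifying $(F_v)' = \{v\}_{\prime}$ for compact $v$ and appealing to the fact that a norm closed face is determined by its set of supporting functionals. I expect the main obstacle to be the compactness assertion in the second paragraph: one has to check that the partial isometry extracted from Edwards--R\"uttimann is exactly $u\,\chi_{_{\{1\}}}(|x|)$, which requires relating the abstract face equation to the spectral decomposition of $|x|$ in $A^{**}$, and then confirming that a spectral projection of an element of $A$ at an isolated extremal eigenvalue is indeed closed and compact in the sense used for the noncommutative topology of $A^{**}$.
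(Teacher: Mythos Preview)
The paper does not prove this theorem; it is quoted verbatim from Akemann--Pedersen \cite[Theorems 4.10 and 4.11]{AkPed92} and closed with a $\Box$, so there is no in-paper argument to compare against.

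On the substance of your sketch, there is a genuine gap in the second paragraph. From an arbitrary $x\in F$ with polar decomposition $x=u|x|$, the face relation $xv^*=vv^*$ only gives $v^*v\le\chi_{_{\{1\}}}(|x|)$, not equality: writing $x=v+P_0(v)(x)$ one has $|x|=v^*v+|P_0(v)(x)|$ with the two summands orthogonal, hence $\chi_{_{\{1\}}}(|x|)=v^*v+\chi_{_{\{1\}}}(|P_0(v)(x)|)$, and the second term need not vanish. Thus your procedure does not recover the correct $v$ from a single element of the face. More fundamentally, the two facts you invoke at the outset --- that every proper norm closed face of $\mathcal{B}_A$ is supported by some $\phi\in S(A^*)$, and that $F=\overline{F}^{\,w^*}\cap\mathcal{B}_A$ --- are not automatic for general Banach spaces and are essentially equivalent to the semi-exposedness statement $F=(F')_{\prime}$ that is part of what Akemann--Pedersen establish; assuming them up front is circular. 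Their actual proof proceeds through the duality $F\mapsto F'$, $G\mapsto G_{\prime}$ together with noncommutative Urysohn-type lemmas for compact and closed projections, rather than by taking weak$^*$ closures of faces.
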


A non-zero partial isometry $e$ in a C$^*$-algebra $A$ is called minimal if $ee^*$ (equivalently, $e^* e$) is a minimal projection in $A$, that is, $ ee^* A e e^* =\mathbb{C}  ee^*.$ By Kadison's transitivity theorem\label{Kadison transitivity} minimal partial isometries in $A^{**}$ belong locally to $A$, and hence every maximal (norm closed) proper face of the unit ball of a C$^*$-algebra $A$ is of the form \begin{equation}\label{eq max norm-closed faces} \left(v + (1 - vv^*) \mathcal{B}_{A^{**}} (1 - v^*v)\right)\cap \mathcal{B}_{A}
 \end{equation} for a unique minimal partial isometry $v$ in $A^{**}$ (compare \cite[Remark 5.4 and Corollary 5.5]{AkPed92}). As a consequence of this result we can deduce the following: let $\varphi$ be an extreme point of the closed unit ball of the dual space, $A^{*},$ of a C$^*$-algebra $A$. Then there exists a unique minimal partial isometry $w\in A^{**}$ such that \begin{equation}\label{eq support pure atoms and minimal} \{\varphi\} = \{\phi\in S(A^{*}) : \phi (w) = 1\} =\{w\}_{_{'}},
 \end{equation} $$F_w = \{x\in \mathcal{B}_A:\ xw^* = ww^*\} =\{x\in \mathcal{B}_A:\ \varphi(x) = 1\} $$ $$= \left(w + (1 - ww^*) \mathcal{B}_{A^{**}} (1 - w^*w)\right)\cap \mathcal{B}_{A}.$$

Combining the results reviewed in the previous paragraph we get the following.

\begin{theorem}\label{t first correspondence between faces and compact partial isometries in the bidual for a surjective isometry} Let $f: S(A) \to S(B)$ be a surjective isometry between the unit spheres of two C$^*$-algebras. Then the following statements hold:
\begin{enumerate}[$(a)$]\item For each non-zero compact partial isometry $e\in A^{**}$ there exists a unique (non-zero) compact partial isometry $\phi_{f} (e)\in B^{**}$ such that $f (F_e) = F_{\phi_f(e)};$
\item The mapping $e\mapsto \phi_f(e)$ defines an order preserving bijection between the sets of non-zero compact partial isometries in $A^{**}$ and the set of non-zero compact partial isometries in $B^{**}$;
\item $\phi_{f}$ maps minimal partial isometries in $A^{**}$ to minimal partial isometries in $B^{**}$.
$\hfill\Box$
\end{enumerate}
\end{theorem}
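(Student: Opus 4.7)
The plan is to deduce the theorem directly from combining Theorems \ref{t all faces von Neumann} and \ref{t faces AkPed}. The first tells us that $f$ and its inverse map norm closed faces of the respective closed unit balls bijectively onto norm closed faces; the second parametrizes these faces, in both $A$ and $B$, by non-zero compact partial isometries in the biduals via the anti-order isomorphism $v\mapsto F_v$. Once both inputs are in hand the result is essentially formal.

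For part $(a)$, fix a non-zero compact partial isometry $e\in A^{**}$. By Theorem \ref{t faces AkPed}, $F_e$ is a non-empty proper norm closed face of $\mathcal{B}_A$, so $F_e\subseteq S(A)$ and $f(F_e)$ is well defined. Theorem \ref{t all faces von Neumann}$(a)$ then shows $f(F_e)$ is a non-empty proper norm closed face of $\mathcal{B}_B$, and a second application of Theorem \ref{t faces AkPed} (this time to $B$) produces a unique non-zero compact partial isometry $\phi_f(e)\in B^{**}$ with $f(F_e)=F_{\phi_f(e)}$, which is the definition we want.

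For part $(b)$, running the same construction with the surjective isometry $f^{-1}:S(B)\to S(A)$ yields a map $\phi_{f^{-1}}$. Uniqueness of $\phi_f$ and $\phi_{f^{-1}}$, together with injectivity of $v\mapsto F_v$, forces $\phi_{f^{-1}}$ to be a two-sided inverse of $\phi_f$, proving bijectivity. Order-preservation is a bookkeeping exercise taking care of the \emph{anti}-order character of Theorem \ref{t faces AkPed}: $v_1\leq v_2$ in $A^{**}$ is equivalent to $F_{v_2}\subseteq F_{v_1}$; since $f$ is a bijection between the two lattices of norm closed faces it preserves inclusions, so $F_{\phi_f(v_2)}=f(F_{v_2})\subseteq f(F_{v_1})=F_{\phi_f(v_1)}$, and inverting the correspondence on $B$ gives $\phi_f(v_1)\leq \phi_f(v_2)$.

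Finally, part $(c)$ is immediate from the observation recorded just before the theorem: a non-zero compact partial isometry $e\in A^{**}$ is minimal precisely when $F_e$ is a maximal proper norm closed face of $\mathcal{B}_A$. Because $\phi_f$ is a lattice bijection, it carries maximal proper faces to maximal proper faces, hence minimal compact partial isometries in $A^{**}$ to minimal compact partial isometries in $B^{**}$. The one point that needs a moment of care, and which could reasonably be called the main (but mild) obstacle, is consistently tracking the anti-order character of the Akemann--Pedersen correspondence when translating inclusion of faces into inequalities of partial isometries.
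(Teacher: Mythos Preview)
Your proposal is correct and matches the paper's approach exactly: the theorem is stated there without proof (marked $\Box$), being presented as an immediate consequence of Theorems~\ref{t all faces von Neumann} and~\ref{t faces AkPed} together with the description of maximal proper faces in terms of minimal partial isometries. Your write-up simply spells out the bookkeeping the paper leaves implicit, including the anti-order tracking, and is entirely sound.
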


The handicap of the statement of the above theorem is that given a non-zero partial isometry $e$ in $A$ (which is clearly compact in $A^{**}$), we cannot guarantee that $\phi_f (e)$ belongs to $B$, we neither know that $f(e)$ is partial isometry in $B$, nor that, even assuming a positive answer to the above statements, $\phi_f (e)$ coincides with $f(e)$. We shall fill all these gaps in this section.\smallskip

We recall that every C$^*$-algebra is a JB$^*$-triple in the sense introduced in \cite{Ka83} with respect to the triple product given by \begin{equation}\label{eq C*-triple prod} \{a,b,c\} = 2^{-1} (a b^* c + c b^* a)
 \end{equation}(see \cite[pages 522-523]{Ka83}).\smallskip

We shall also consider the Peirce decomposition associated with a partial isometry $e$ in a C$^*$-algebra $A$. It is easy to check that $A= A_0(e) \oplus A_1 (e)\oplus A_2(e)$, where $A_0(e)$, $A_1(e)$ and $A_2 (e)$ are the Peirce subspaces given by $$A_2(e) = ee^* A e^*e=\{x\in A : \{e,e,x\} = x\} ,$$ $$A_1(e) = (1-ee^*) A e^*e \oplus ee^* A(1-e^*e)=\{x\in A : \{e,e,x\} = 1/2 x\},$$ and $A_0(e) = (1-ee^*) A (1-e^*e)=\{x\in A : \{e,e,x\} = 0\}$. The Peirce subspaces need not be C$^*$-subalgebras of $A$. However, it is easy to see that $A_2(e)$, $A_1(e),$ and $A_0(e)$ are all closed under the triple product defined in \eqref{eq C*-triple prod}, and consequently they are all JB$^*$-subtriples of $A$. Furthermore, the Peirce-2 subspace $A_2(e)$ is a JB$^*$-algebra with product $a\circ_e b:= \{a,e,b\}$ and involution $a^{\sharp_e}:=\{e,a,e\}$, respectively. The natural projection of $A$ onto $A_j(e)$ will be denoted by $P_j(e)$. It is known that $P_2(e) (x) = ee^* x e^*e$, $P_1(e) (x) = ee^* x (1-e^*e) + (1-ee^*) x e^*e$, and $P_0(e) (x) = (1-ee^*) x (1-e^*e)$ for all $x\in A$. It is easy to check that, for every partial isometry $e\in A$, we have $A_2 (e)\perp A_0(e)$. \smallskip

The next result has been taken from \cite{FerPe17}.

\begin{proposition}\label{p l 3.4 new}{\rm\cite[Proposition 2.2]{FerPe17}} Let $v$ and $x$ be norm-one elements in a C$^*$-algebra $A$. Suppose that $v$ is a minimal partial isometry and $\|v-x\| = 2$. Then $x= -v + P_0(v) (x)$. $\hfill\Box$
\end{proposition}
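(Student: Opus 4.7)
The plan is to exploit the uniqueness of the support functional of a minimal partial isometry and then read off the Peirce components of $x$ from the description of $F_v$. The condition $\|v-x\|=2$ together with $\|x\|=\|v\|=1$ is the geometric heart of the statement: it should force $-x$ to lie in the norm-closed face $F_v$.

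First, apply Hahn--Banach to the element $x-v$, of norm $2$, to obtain $\psi\in A^*$ with $\|\psi\|=1$ and $\psi(x-v)=2$. Since $|\psi(x)|\leq 1$ and $|\psi(-v)|\leq 1$, both must equal $1$, giving $\psi(x)=1$ and $(-\psi)(v)=1$. Now invoke the minimality of $v$: since $v$ is a minimal partial isometry of $A\subseteq A^{**}$, identity \eqref{eq support pure atoms and minimal} asserts that $\{v\}_{'}=\{\phi\in S(A^*):\phi(v)=1\}$ is a singleton, consisting of the unique extreme functional $\varphi\in\partial_e(\mathcal{B}_{A^*})$ with $\varphi(v)=1$. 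Applying this to $-v$ (which is still a minimal partial isometry) shows that the norm-one functional achieving $1$ at $-v$ is unique, forcing $\psi = -\varphi$, and hence $\varphi(-x)=1$. Equivalently, $-x\in \{v\}_{''}=F_v$.

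Second, use the Akemann--Pedersen description of $F_v$ given in Theorem~\ref{t faces AkPed}:
$$F_v = \bigl(v+(1-vv^*)\,\mathcal{B}_{A^{**}}\,(1-v^*v)\bigr)\cap \mathcal{B}_A.$$
Thus $-x = v + z$ for some $z\in (1-vv^*)\,A^{**}\,(1-v^*v)$. Since $z=-x-v\in A$, the element $z$ in fact lies in $A\cap (1-vv^*)A^{**}(1-v^*v) = (1-vv^*)A(1-v^*v) = A_0(v)$.

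Third, read off the Peirce components. Rewriting $x=-v-z$ with $-v\in A_2(v)$ and $-z\in A_0(v)$, the uniqueness of the Peirce decomposition yields $P_2(v)(x) = -v$, $P_1(v)(x)=0$, and $P_0(v)(x) = -z$, which is precisely the desired identity $x = -v + P_0(v)(x)$. The only subtle point in the argument is the second step, where the uniqueness statement in \eqref{eq support pure atoms and minimal} collapses the set of candidate Hahn--Banach functionals to a single $\varphi$ and thereby places $-x$ inside a single, explicitly described face; every other step is then bookkeeping with the Peirce projections.
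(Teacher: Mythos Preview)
The paper does not give its own proof of this proposition; it is quoted verbatim from \cite[Proposition~2.2]{FerPe17} and closed with a box. There is therefore nothing in the present paper to compare your argument against.

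That said, your proof is correct and is precisely in the spirit of the surrounding material: the very next result, Proposition~\ref{p l 3.4 new new}, is proved by the same Hahn--Banach/extreme-functional mechanism you use, only there the minimal partial isometry $w$ has to be manufactured below $v$, whereas here $v$ itself is already minimal and plays that role. One small point of precision: the displayed identity \eqref{eq support pure atoms and minimal} is stated in the direction ``given $\varphi\in\partial_e(\mathcal{B}_{A^*})$, there is a unique minimal $w$ with $\{\varphi\}=\{w\}_{'}$'', while you invoke the converse ``given minimal $v$, the set $\{v\}_{'}$ is a singleton''. This converse is true and follows from Theorem~\ref{t faces AkPed} (the order-isomorphism $v\mapsto\{v\}_{'}$ between compact partial isometries and weak$^*$ closed faces sends minimal $v$ to a minimal nonempty weak$^*$ closed face, hence to an extreme point), but strictly speaking it is that theorem, not \eqref{eq support pure atoms and minimal} alone, that justifies the step.
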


It seems natural to ask what happens if in the above result $v$ is not a minimal partial isometry.

\begin{proposition}\label{p l 3.4 new new} Let $v$ and $x$ be norm-one elements in a C$^*$-algebra $A$. Suppose that $v$ is a partial isometry and $\|v-x\| = 2$. Then there exists a minimal partial isometry $w\in A^{**}$ such that $w\leq v$ and $x= -w + P_0(w) (x)$. In particular the element $P_2 (v) (v-x)$ has norm $2$.
\end{proposition}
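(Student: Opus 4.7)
The strategy is to reduce to the minimal case already handled in Proposition \ref{p l 3.4 new}, by locating a minimal partial isometry $w\in A^{**}$ dominated by $v$ such that $v$ and $-x$ lie in the same maximal norm closed face $F_w$. The starting point is a Hahn--Banach/Krein--Milman argument: since $\|v-x\|=2$, the set $\{\phi\in \mathcal{B}_{A^*}:\phi(v-x)=2\}$ is a non-empty, weak-$^*$ compact, convex face of $\mathcal{B}_{A^*}$, hence contains an extreme point $\varphi$ of $\mathcal{B}_{A^*}$. By construction $\varphi(v)=1$ and $\varphi(-x)=1$.

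By \eqref{eq support pure atoms and minimal} there is a unique minimal partial isometry $w\in A^{**}$ with $\{\varphi\}=\{w\}_{'}$, and the associated norm closed face is $F_w=\{y\in \mathcal{B}_A:\varphi(y)=1\}=(w+(1-ww^*)\mathcal{B}_{A^{**}}(1-w^*w))\cap \mathcal{B}_A$. Thus $v,-x\in F_w$. Writing $v=w+z$ with $z=(1-ww^*)v(1-w^*w)$ and using $w(1-w^*w)=0=(1-ww^*)w$, the cross terms vanish in $vv^*=ww^*+zz^*$, so $zz^*$ is a projection, $z$ is a partial isometry orthogonal to $w$, and therefore $w\leq v$ in the lattice of partial isometries. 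Applying the same face decomposition to $-x\in F_w$ gives
\[
-x=w+(1-ww^*)(-x)(1-w^*w),
\]
which rearranges to $x=-w+(1-ww^*)x(1-w^*w)=-w+P_0(w)(x)$, yielding the first assertion.

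For the final statement, the projection $P_2(v)(y)=vv^*yv^*v$ is contractive, so $\|P_2(v)(v-x)\|\leq\|v-x\|=2$. The reverse inequality will come from a support identity: the functional $\psi(a):=\hat\varphi(ww^*aw^*w)$ on $A$ satisfies $\|\psi\|\leq 1$ and, by weak-$^*$ continuity, $\hat\psi(w)=\hat\varphi(ww^*ww^*w)=\hat\varphi(w)=1$; minimality of $w$ forces $\{w\}_{'}=\{\varphi\}$, so $\psi=\varphi$. Combined with $ww^*\leq vv^*$ and $w^*w\leq v^*v$ (from $w\leq v$) and the identities $ww^*vw^*w=w$, $ww^*xw^*w=-w$ obtained in the previous paragraph, this gives
\[
\varphi\bigl(P_2(v)(v-x)\bigr)=\hat\varphi\bigl(ww^*(v-x)w^*w\bigr)=\hat\varphi(2w)=2,
\]
so $\|P_2(v)(v-x)\|\geq 2$, completing the proof. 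The only delicate point I foresee is justifying the support identity $\psi=\varphi$; everything else is bookkeeping with the facial description already recalled in the excerpt.
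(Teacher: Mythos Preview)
Your proof is correct. The first half---finding the extreme functional $\varphi$ via Hahn--Banach and Krein--Milman, extracting the minimal partial isometry $w$ from \eqref{eq support pure atoms and minimal}, and reading off $w\leq v$ and $x=-w+P_0(w)(x)$ from the face description---is essentially the paper's argument, the only difference being that the paper cites Proposition~\ref{p l 3.4 new} (applied in $A^{**}$ after noting $\|w-x\|=2$) rather than reading the decomposition directly from $-x\in F_w$ as you do.

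For the ``in particular'' clause you take a genuinely different route. The paper argues algebraically: from $v-x = 2w + P_0(w)(v-x)$ and $w\leq v$ it obtains $P_2(v)(v-x)=2w+P_0(w)P_2(v)(v-x)$, and then the orthogonality of the summands gives norm $2$ as a maximum. You instead use the functional $\varphi$ itself as a norming witness, showing $\varphi(P_2(v)(v-x))=2$ via the support identity $\varphi(a)=\hat\varphi(ww^*aw^*w)$. Your worry about this identity is unnecessary: once you note that $\psi$ has norm at most $1$ and that its normal extension satisfies $\hat\psi(w)=\hat\varphi(w)=1$, the singleton $\{w\}_{'}=\{\varphi\}$ forces $\psi=\varphi$ immediately. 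Both approaches are short; the paper's avoids any functional calculus on $\varphi$, while yours avoids checking that $P_2(v)$ and $P_0(w)$ commute.
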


\begin{proof} By the Hahn-Banach theorem the set $$\mathcal{C} = \Big\{\frac{v-x}{2}\Big\}' :=\{\phi\in A^* : \|\phi\|=1, \ \phi (v-x) =2 \}$$ is a non-empty weak$^*$ closed face of $\mathcal{B}_{A^{*}}$. By the Krein-Milman theorem,  there exists $\varphi\in \partial_{e}(\mathcal{B}_{A^{*}})$ such that $\varphi\in \mathcal{C}$. By \eqref{eq support pure atoms and minimal} there exists a minimal partial isometry $w$ in $A^{**}$ such that $$F_w = \{x\in \mathcal{B}_A:\ xw^* = ww^*\} =\{z\in \mathcal{B}_A:\ \varphi(z) = 1\}=\{\varphi\}_{_{'}}.$$ By construction $\mathcal{C}_{_{'}} := \{z\in S(A) : \phi (z) =1\ \forall \phi\in \mathcal{C} \}$ is a non-empty norm closed face of $\mathcal{B}_{A}$ which is trivially contained in $F_w$ (just observe that $\varphi\in \mathcal{C}$).\smallskip

Since $\varphi (v-x) =2,$ we get $\varphi (v) =1 = \varphi (x),$ and thus $v\in \{\varphi\}_{_{'}} = F_{w},$ which proves that $v\geq w$. On the other hand, $2 = \varphi (w-x)\leq \| w-x\| \leq2$. Therefore, by applying Proposition \ref{p l 3.4 new} in $A^{**}$, we derive that
$x= -w + P_0(w) (x)$.\smallskip

Finally, since $$v-x \!= 2 w + (v-w) - P_0(w) (x)=2 w +P_0(w) ( (v-w) - x)= 2 w +P_0(w) ( v - x),$$ it follows that $P_2(v) (v-x) = 2 w + P_2(v) P_0(w)(v-x) = 2 w + P_0(w) P_2(v)(v-x).$ Now, we deduce from the orthogonality of $w$ and $P_0(w) P_2(v)(v-x)$ that $$\|P_2(v) (v-x) \| =\max\{ 2 \|w\| ,\|P_0(w) P_2(v)(v-x)\| \} =2.$$
\end{proof}

We recall now some basic notions on ultraproducts. Given an ultrafilter $\mathcal{U}$ on an index set $I$, and a family $(X_i)_{i\in I}$ of Banach spaces, the symbol $(X_i)_{\mathcal{U}}$ will denote the \emph{ultraproduct} of the $X_i,$ and if $X_i=X$ for all $i$, we write $(X)_{\mathcal{U}}$ for the ultrapower of $X$. As usually, elements in the Banach space $(X_i)_{\mathcal{U}}$ will be denoted in the form $\widetilde{x}=[x_i]_{\mathcal{U}}$, where $(x_i)$ is called a representing family or a representative of $\widetilde{x}$, and $\left\|{\widetilde{x}}\right\|=\lim_{\mathcal{U}}\left\|{x_i}\right\|$ independently of the representative. The basic facts and definitions concerning ultraproducts can be found in \cite{Hein80}.\smallskip

C$^*$-algebras are stable under $\ell_{\infty}$-sums (see \cite[Proposition 3.1$(ii)$]{Hein80}), that is, the ultraproduct $\ell_{\infty} (A_i)$ of a family $(A_i)$ of C$^*$-algebras is a C$^*$-algebra with respect to the natural operations.\smallskip

Our next goal is a quantitative version of the previous Proposition \ref{p l 3.4 new new}.

\begin{proposition}\label{p l 3.4 new new quatitative} For each $1>\varepsilon >0$ there exists $\delta>0$ satisfying the following property: given a C$^*$-algebra $A$, a partial isometry $e$ in $A$ and $x\in S(A)$ with $\|e-x \| > 2 -\delta$ we have $\| P_2 (e) (e -x) \|> 2-\varepsilon.$
\end{proposition}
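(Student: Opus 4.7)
The plan is to argue by contradiction using the ultraproduct reduction, so that Proposition~\ref{p l 3.4 new new} does all the heavy lifting in the limit. Suppose the statement fails: there exists $\varepsilon_0 \in (0,1)$ such that for every $n \in \NN$ one can find a C$^*$-algebra $A_n$, a partial isometry $e_n \in A_n$ and a norm-one element $x_n \in S(A_n)$ with $\|e_n - x_n\| > 2 - 1/n$ and yet $\|P_2(e_n)(e_n - x_n)\| \leq 2 - \varepsilon_0$.

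Fix a free ultrafilter $\mathcal{U}$ on $\NN$ and form the ultraproduct $\widetilde{A} = (A_n)_{\mathcal{U}}$, which, as recalled in the paragraph following Proposition~\ref{p l 3.4 new new}, is again a C$^*$-algebra under the natural operations. Put $\widetilde{e} = [e_n]_{\mathcal{U}}$ and $\widetilde{x} = [x_n]_{\mathcal{U}}$. The algebraic identity $e_n e_n^* e_n = e_n$ is preserved in the ultraproduct, hence $\widetilde{e}\,\widetilde{e}^{\,*}\widetilde{e} = \widetilde{e}$ and $\widetilde{e}$ is a partial isometry in $\widetilde{A}$. Moreover,
\[
\|\widetilde{x}\| = \lim_{\mathcal{U}} \|x_n\| = 1, \qquad \|\widetilde{e} - \widetilde{x}\| = \lim_{\mathcal{U}} \|e_n - x_n\| = 2.
\]

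Applying Proposition~\ref{p l 3.4 new new} to the partial isometry $\widetilde{e}$ and to the norm-one element $\widetilde{x}$ in $\widetilde{A}$ we conclude that $\|P_2(\widetilde{e})(\widetilde{e} - \widetilde{x})\| = 2$. On the other hand, since $P_2(e)(y) = ee^* y e^* e$ is an algebraic expression in $e$ and $y$, it is compatible with the ultraproduct construction:
\[
P_2(\widetilde{e})(\widetilde{e} - \widetilde{x}) = \widetilde{e}\,\widetilde{e}^{\,*}(\widetilde{e} - \widetilde{x})\,\widetilde{e}^{\,*}\widetilde{e} = \bigl[\, e_n e_n^* (e_n - x_n) e_n^* e_n \,\bigr]_{\mathcal{U}} = \bigl[\, P_2(e_n)(e_n - x_n) \,\bigr]_{\mathcal{U}}.
\]
Consequently
\[
\|P_2(\widetilde{e})(\widetilde{e} - \widetilde{x})\| = \lim_{\mathcal{U}} \|P_2(e_n)(e_n - x_n)\| \leq 2 - \varepsilon_0,
\]
contradicting the value $2$ obtained above.

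The only delicate point is the verification that all the relevant objects lift to the ultraproduct with their desired properties; this is straightforward because each condition (being a partial isometry, being norm-one, the value of $\|\widetilde{e} - \widetilde{x}\|$, and the action of the Peirce-$2$ projection) is expressed by a uniformly bounded algebraic or norm relation. One could in principle try a direct, effective proof by chasing the minimal partial isometry $w \leq e$ produced in Proposition~\ref{p l 3.4 new new} through a quantitative Hahn--Banach/states argument, but the ultrapower compactness trick neatly sidesteps that analysis and is the cleanest route.
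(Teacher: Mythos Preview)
Your proof is correct and follows essentially the same approach as the paper: argue by contradiction, pass to the ultraproduct $(A_n)_{\mathcal{U}}$, and apply Proposition~\ref{p l 3.4 new new} to the partial isometry $[e_n]_{\mathcal{U}}$ and the norm-one element $[x_n]_{\mathcal{U}}$ at distance~$2$. The only cosmetic difference is that the paper explicitly invokes the minimal partial isometry $\widetilde{w}$ produced by Proposition~\ref{p l 3.4 new new} and recomputes $\|P_2([e_n]_{\mathcal{U}})([e_n]_{\mathcal{U}}-[x_n]_{\mathcal{U}})\|=2$ from the decomposition, whereas you cite the ``In particular'' clause of that proposition directly; your version is slightly more streamlined but logically identical.
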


\begin{proof} Arguing by contradiction, we assume the existence of $\varepsilon>0$, such that for each natural $n$ we can find a C$^*$-algebra $A_n,$ a non-zero partial isometry $e_n\in A_n,$ and $x_n$ in $S(A)$ with $\| e_n-x_n \| > 2 -\frac{1}{n}$ and $\| P_2 (e_n) (e_n -x_n) \|\leq  2-\varepsilon.$\smallskip

Let us consider a free ultrafilter $\mathcal{U}$ over $\mathbb{N}$, and let $(A_n)_{\mathcal{U}}$ denote the ultraproduct of the family $(A_n)$. Clearly the element $[e_n]_{\mathcal{U}}$ is a partial isometry in $(A_n)_{\mathcal{U}}$.\smallskip

Since $\|[x_n]_{\mathcal{U}}\| = 1$, and $2\geq \| [e_n]_{\mathcal{U}} - [x_n]_{\mathcal{U}}\| = \lim_{\mathcal{U}} \| e_n-x_n\| \geq \lim_{\mathcal{U}}  2-\frac1n =2,$ by Proposition \ref{p l 3.4 new new} we can find a minimal partial isometry $\widetilde{w}$ in $((A_n)_{\mathcal{U}})^{**}$ such that
$$[x_n]_{\mathcal{U}} = -\widetilde{w} + P_0 (\widetilde{w}) [x_n]_{\mathcal{U}},\hbox{ and } [e_n]_{\mathcal{U}} = \widetilde{w} + P_0 (\widetilde{w}) [e_n]_{\mathcal{U}}.$$ Therefore, $$ 2-\varepsilon \geq \left\| [P_2 (e_n) (e_n-x_n)]_{\mathcal{U}} \right\| = \left\| P_2 ([e_n]_{\mathcal{U}}) \left([e_n]_{\mathcal{U}} -  [x_n]_{\mathcal{U}}\right) \right\| $$ $$ = \left\| P_2 ([e_n]_{\mathcal{U}}) \Big( 2 \widetilde{w} + P_0 (\widetilde{w})  \left([e_n]_{\mathcal{U}} -  [x_n]_{\mathcal{U}}\right) \Big) \right\| $$ $$ = \left\|  2 \widetilde{w} + P_2 ([e_n]_{\mathcal{U}})\Big(P_0 (\widetilde{w})  \left([e_n]_{\mathcal{U}} -  [x_n]_{\mathcal{U}}\right) \Big) \right\| $$ $$ = \left\|  2 \widetilde{w} + P_0 (\widetilde{w})  \Big( P_2 ([e_n]_{\mathcal{U}}) \left([e_n]_{\mathcal{U}} -  [x_n]_{\mathcal{U}}\right) \Big) \right\|=2,$$ which is impossible.
\end{proof}

We continue our study with a strengthened version of \cite[Theorem 2.3]{FerPe17b} for non-necessarily minimal partial isometries.

\begin{theorem}\label{t surjective isometries map partial isometries into points of strong subdiff}
Let $A$ and $B$ be C$^*$-algebras, and suppose that $f: S(A) \to S(B)$ is a surjective isometry. Let $e$ be a non-zero partial isometry in $A$. Then $1$ is isolated in the spectrum of $|f(e)|$.
\end{theorem}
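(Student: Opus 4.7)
My plan is to identify $f(e)$ with a perturbation of a compact partial isometry in $B^{**}$ via Theorem \ref{t first correspondence between faces and compact partial isometries in the bidual for a surjective isometry}, and then to invoke the quantitative Proposition \ref{p l 3.4 new new quatitative} in order to rule out residual spectrum of $|f(e)|$ accumulating at $1$.

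First, I would set $v:=\phi_f(e)\in B^{**}$, the compact partial isometry afforded by Theorem \ref{t first correspondence between faces and compact partial isometries in the bidual for a surjective isometry}, so that $f(F_e)=F_v$. Since $f(e)\in F_v$, one has $f(e)v^*=vv^*$, which yields the orthogonal decomposition
\[f(e)=v+y,\qquad y:=(1-vv^*)\,f(e)\,(1-v^*v)\in B^{**},\qquad v\perp y.\]
The C$^*$-orthogonality $v^*y=0=vy^*$ makes $|f(e)|^2=v^*v\oplus y^*y$ an orthogonal sum, so $\sigma(|f(e)|)\subseteq\{0,1\}\cup\sigma(|y|)$. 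Hence $1$ is isolated in $\sigma(|f(e)|)$ if and only if $\|y\|<1$, and the theorem reduces to proving this strict inequality.

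Next, I would argue by contradiction: suppose $\|y\|=1$. By the sup-duality $\|y\|=\sup\{|\psi(y)|:\psi\in\mathcal{B}_{B^*}\}$, one can pick $\psi_n\in S(B^*)$ with $\psi_n(y)\to 1$. Upgrading these weak$^*$ data to actual distance witnesses is most naturally done in a free ultrapower: I would form $(B)_\mathcal{U}$, realize the (non-attained) norm of $y$ inside $((B)_\mathcal{U})^{**}$ as attained by an honest element, and construct $\widetilde{z}\in S((B)_\mathcal{U})$ with $\|[f(e)]_\mathcal{U}-\widetilde{z}\|=2$ and $\widetilde{z}$ sitting entirely in the Peirce-$0$ region of $[v]_\mathcal{U}$. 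Setting $\widetilde{x}:=f_\mathcal{U}^{-1}(\widetilde{z})\in S((A)_\mathcal{U})$, one obtains $\|[e]_\mathcal{U}-\widetilde{x}\|=2$. Applying Proposition \ref{p l 3.4 new new} to the partial isometry $[e]_\mathcal{U}$ and $\widetilde{x}$ inside $(A)_\mathcal{U}$ extracts a minimal partial isometry $\widetilde{w}\le[e]_\mathcal{U}$ with the form prescribed by that proposition; transferring this through the facial correspondence for $f_\mathcal{U}$ produces a minimal partial isometry below $[v]_\mathcal{U}$ which must detect a nontrivial Peirce-$2$ component in $\widetilde{z}$, contradicting its Peirce-$0$ construction.

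The main obstacle will be controlling how Theorem \ref{t first correspondence between faces and compact partial isometries in the bidual for a surjective isometry} and the Peirce structure lift to the ultrapower: one must verify that the facial bijection $\phi_{f_\mathcal{U}}$ behaves compatibly with $\phi_f$ along the diagonal embedding, and that the compact partial isometries inherited from $B^{**}\hookrightarrow((B)_\mathcal{U})^{**}$ remain compact in the ultrapower sense, so that Theorem \ref{t first correspondence between faces and compact partial isometries in the bidual for a surjective isometry} is genuinely available inside $(A)_\mathcal{U}\to(B)_\mathcal{U}$. A secondary subtlety, which only surfaces as a by-product of the argument, is that a priori one has only $v^*v\le \chi_{\{1\}}(|f(e)|)$; the equality $v^*v=\chi_{\{1\}}(|f(e)|)$ becomes automatic once $\|y\|<1$ has been established, since otherwise the excess projection $\chi_{\{1\}}(|f(e)|)-v^*v$ would contribute a copy of $\{1\}$ to $\sigma(|y|)$ and force $\|y\|=1$.
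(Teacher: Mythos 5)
Your opening reduction contains a harmless misstatement and then the argument develops a genuine gap. The misstatement: writing $f(e)=v+y$ with $v=\phi_f(e)$ and $y=(1-vv^*)f(e)(1-v^*v)$, the asserted equivalence ``$1$ is isolated in $\sigma(|f(e)|)$ if and only if $\|y\|<1$'' fails in the ``only if'' direction (a priori $y$ could be a partial isometry orthogonal to $v$, so that $\|y\|=1$ while $\sigma(|f(e)|)\subseteq\{0,1\}$). You only use the ``if'' direction, so no logic breaks, but you have thereby committed yourself to proving something strictly stronger than the theorem, namely $\|P_0(\phi_f(e))(f(e))\|<1$. That stronger statement is essentially the hard core of Theorem \ref{t A}, whose proof in the paper \emph{uses} the present theorem; so you are attempting to invert the paper's logical order, which raises the bar considerably.

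The genuine gap is that the contradiction argument under ``$\|y\|=1$'' is never carried out: both load-bearing steps are asserted. First, you need $\widetilde{z}\in S((B)_{\mathcal{U}})$ with $\|[f(e)]_{\mathcal{U}}-\widetilde{z}\|=2$ lying in the Peirce-$0$ space of the image $\hat{v}$ of $v$. One can plausibly produce $z_n\in S(B)$ with $z_n\perp v$ in $B^{**}$ and $\|f(e)+z_n\|\to 2$ (using that $1-vv^*$ and $1-v^*v$ are open), but termwise orthogonality $z_n\perp v$ does \emph{not} formally give $[z_n]\perp\hat{v}$ in $((B)_{\mathcal{U}})^{**}$: $\hat{v}$ is a weak$^*$ limit of diagonal elements, and multiplying against it does not commute with taking ultralimits of the varying representatives $z_n$. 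Second, the identification $\phi_{f_{\mathcal{U}}}([e]_{\mathcal{U}})=\hat{v}$ — needed to place the minimal partial isometry extracted from Proposition \ref{p l 3.4 new new} below $\hat{v}$ and contradict the Peirce-$0$ location of $\widetilde{z}$ — is precisely what you label ``the main obstacle,'' and it is left unresolved; one can show $F_{\phi_{f_{\mathcal{U}}}([e])}=(F_v)_{\mathcal{U}}$, but matching this with $F_{\hat{v}}$ runs into the same product-versus-ultralimit problem. The paper sidesteps all of this: it assumes $1$ is \emph{not} isolated, uses continuous functional calculus on $|f(e)|$ to build concrete orthogonal pairs $\hat{x}_n=v\hat{a}_n$, $\hat{y}_n=v\hat{b}_n$ in $S(B)$ with $\hat{y}_n\in F_{\phi_f(e)}$ and $\|f(e)+\hat{x}_n\|\to 2$, pulls them back through $f^{-1}$, and plays Proposition \ref{p l 3.4 new new quatitative} (forcing $\|P_2(e)(e-x_{n_0})\|>\frac32$) against $\|y_{n_0}-x_{n_0}\|=1$; the ultraproduct occurs only inside the self-contained proof of that quantitative proposition, where no pre-existing bidual element has to be tracked. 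To salvage your route you would essentially have to import the positivity argument of Theorem \ref{t A} (the $|f(e)|-a$ device) so as to stay inside $B$ and dispense with the ultrapower.
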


\begin{proof} The property established in Proposition \ref{p l 3.4 new new quatitative} allows us to adapt a refinement of the original proof in \cite[Theorem 2.5]{FerPe17c}.\smallskip

Arguing by contradiction, we assume that 1 is not isolated in $\sigma(|f(e)|)$. Let $f(e) = v |f(e)|$ denote the polar decomposition of $f(e)$ in $B$ with $v$ a partial isometry in $B^{**}$. It is well known that the subalgebra $B_{|f(e)|}$ of $B$ generated by $|f(e)|$ identifies with $C_0 (\sigma (|f(e)|))$.\smallskip

By Theorem \ref{t first correspondence between faces and compact partial isometries in the bidual for a surjective isometry} there exists a non-zero compact partial isometry $u\in B^{**}$ such that \begin{equation}\label{eq faces thm 1} f(F_e) = f((e + A_0(e)) \cap \mathcal{B}_A)    = F_u= (u + B_0^{**}(e)) \cap \mathcal{B}_B.
\end{equation}

For each natural $n,$ we define $\hat{a}_n, \hat{b}_n$ the elements in $B_{|f(e)|}$ given by:
$$\hat{a}_n(t):=\left\{%
\begin{array}{ll}
    \frac{n t}{n-1}, & \hbox{if $0\leq t\leq 1-\frac1n$} \\
    \hbox{affine}, & \hbox{if $1-\frac1n\leq t\leq 1-\frac{1}{2n}$} \\
    0, & \hbox{if $1-\frac{1}{2n}\leq t\leq 1$} \\
\end{array}%
\right. \  ; \hat{b}_n(t):=\left\{%
\begin{array}{ll}
    0, & \hbox{if $0\leq t\leq 1-\frac{1}{2n}$} \\
    \hbox{affine}, & \hbox{if $1-\frac{1}{2n}\leq t\leq 1$} \\
    1, & \hbox{if $ t=1$.} \\
\end{array}%
\right.$$

Clearly $\hat{a}_n,\hat{b}_n\in S(B)$ and $\hat{a}_n\perp\hat{b}_n$. If we set $\hat{x}_n = v \hat{a}_n$ and $\hat{y}_n = v \hat{b}_n$, we get two elements in $S(B)$ (compare \cite[Lemma 2.1]{AkPed77}) satisfying $\hat{x}_n\perp\hat{y}_n$.\smallskip

We shall prove that $\hat{y}_n\in (u+B^{**}_0 (u))\cap \mathcal{B}_B = F_{u}$. Namely, by \eqref{eq faces thm 1} we know that $f(e) = u + P_0(u) (f(e))$ in $B^{**}$, and thus $|f(e)| = u^*u  + |P_0(u) (f(e))|$ in $B^{**},$ which shows that $u^*u \leq \chi_{_{\{1\}}}$ the characteristic function of the set $\{1\}$ in $C_0 (\sigma (|f(e)|))^{**}$. Therefore $\hat{y}_n = u + P_0(u) (\hat{y}_n) \in F_u$.\smallskip

The elements $x_n=f^{-1}(-\hat{x}_n)\in S(A),$ and $y_n=f^{-1}(\hat{y}_n)\in F_e=(e+A^{**}_0(e))\cap \mathcal{B}_A =(e+A_0(e))\cap \mathcal{B}_A$ (equivalently, $y_n = e + P_0 (e) (y_n)$) satisfy $$1=\|\hat{x}_n+\hat{y}_n\|=\|\hat{y}_n- (-\hat{x}_n)\|=\|y_n-x_n\|,$$  $$2-\frac1n=\|f(e)+\hat{x}_n\|=\|f(e)-(-\hat{x}_n)\|=\|e-x_n\|.$$

Applying Proposition \ref{p l 3.4 new new quatitative} we can find a natural $n_0$ such that $$\|P_2 (e) (e- x_{n_0})\|>\frac32 >1.$$

Finally, the inequalities $$1\geq \|P_2(e) (y_{n_0} -x_{n_0} )\|=\|P_2(e)(e+P_0(e) (y_{n_0})-x_{n_0})\|=\|P_2(e)(e-x_{n_0})\|>\frac32,$$ give the desired contradiction.
\end{proof}

We can solve now all concerns appearing after Theorem \ref{t first correspondence between faces and compact partial isometries in the bidual for a surjective isometry} via a generalization of \cite[Theorem 2.5]{FerPe17b}.

\begin{theorem}\label{t A}
Let $f: S(A) \to S(B)$ be a surjective isometry between the unit spheres of two C$^*$-algebras. Then $f$ maps non-zero partial isometries in $A$ into non-zero partial isometries in $B$. Moreover, for each non-zero partial isometry $e$ in $A$, $\phi_f (e) = f(e)$ and  there exits a surjective real linear isometry $$T_{e} : (1 - ee^*) A  (1 - e^*e)\to  {(1 - f(e)f(e)^*) B (1 - f(e)^*f(e))}$$ such that $$ f(e + x) = f(e) + T_e (x), \hbox{ for all $x$ in } \mathcal{B}_{(1 - ee^*) A  (1 - e^*e)}.$$ In particular the restriction of $f$ to the face $F_{e} =e + (1 - ee^*) \mathcal{B}_A (1 - e^*e)$ is a real affine function.
\end{theorem}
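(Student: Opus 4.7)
The plan is to proceed in three stages, combining the spectral information of Theorem \ref{t surjective isometries map partial isometries into points of strong subdiff} with the face correspondence $\phi_f$ of Theorem \ref{t first correspondence between faces and compact partial isometries in the bidual for a surjective isometry}, and culminating in a Mankiewicz-style extension. In Stage 1, Theorem \ref{t surjective isometries map partial isometries into points of strong subdiff} asserts that $1$ is isolated in $\sigma(|f(e)|)$. Writing $f(e)=v|f(e)|$ for the polar decomposition in $B^{**}$, the spectral projection $p:=\chi_{_{\{1\}}}(|f(e)|)$ lies in $B$ (since $1$ is isolated), and the Akemann--Pedersen selection recalled in Section~2 (``$v h(|f(e)|)\in B$ when $h(0)=0$'') puts $e':=vp$ in $B$ as a partial isometry. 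This yields an orthogonal decomposition $f(e)=e'+a$ with $a:=v(1-p)|f(e)|\in B$, $a\perp e'$ and $\|a\|<1$, and a direct check gives $f(e)(e')^*=e'(e')^*$, i.e.\ $f(e)\in F_{e'}$.

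Stage 2 will identify $e'$ with $u:=\phi_f(e)\in B^{**}$, where $f(F_e)=F_u$ and $f(e)=u+z$ for some $z\in(1-uu^*)B^{**}(1-u^*u)$ with $\|z\|\le 1$. Applying Theorem \ref{t first correspondence between faces and compact partial isometries in the bidual for a surjective isometry} to $f^{-1}$ produces a compact partial isometry $e_0\in A^{**}$ with $f^{-1}(F_{e'})=F_{e_0}$ and $\phi_f(e_0)=e'$; the inclusion $f(e)\in F_{e'}$ forces $e\in F_{e_0}$, hence $e_0\le e$, and order-preservation of $\phi_f$ gives $e'=\phi_f(e_0)\le u$ in the lattice of compact partial isometries of $B^{**}$. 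If $e'<u$ strictly, then $u-e'$ is a non-zero partial isometry with $(u-e')(u-e')^*=uu^*-e'(e')^*$ and $(u-e')^*(u-e')=u^*u-(e')^*e'$, orthogonal to $e'$ by these formulas and, using $uu^*z=zu^*u=0$, also orthogonal to $z$. The orthogonal decomposition $a=(u-e')+z$ would then force $\|a\|\ge\|u-e'\|=1$, contradicting $\|a\|<1$. Hence $u=e'\in B$.

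Stage 3 will build $T_e$ via Mankiewicz. Since $u\in B$ is a partial isometry, the orthogonality $u\perp y$ for $y\in B_0(u):=(1-uu^*)B(1-u^*u)$ gives $\|u+y\|=\max\{1,\|y\|\}$, so $F_u=u+\mathcal{B}_{B_0(u)}$; similarly $F_e=e+\mathcal{B}_{A_0(e)}$. The shifted restriction
\[
g:\mathcal{B}_{A_0(e)}\longrightarrow\mathcal{B}_{B_0(u)},\qquad g(y):=f(e+y)-u,
\]
is a surjective isometry, and Mankiewicz's theorem extends it to a surjective affine isometry $\tilde g:A_0(e)\to B_0(u)$, $\tilde g(y)=Ty+c$, with $T$ a surjective real linear isometry and $c=g(0)=f(e)-u=z$. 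From $\mathcal{B}_{B_0(u)}=\tilde g(\mathcal{B}_{A_0(e)})=T(\mathcal{B}_{A_0(e)})+c=\mathcal{B}_{B_0(u)}+c$ and the rigidity $\mathcal{B}+c=\mathcal{B}\Rightarrow c=0$ (iterating gives $nc\in\mathcal{B}$ for every $n\ge 1$), we obtain $z=0$. Therefore $f(e)=u=e'$ is a partial isometry in $B$, $\phi_f(e)=f(e)$, and setting $T_e:=T$ gives $f(e+y)=f(e)+T_e(y)$ for all $y\in\mathcal{B}_{A_0(e)}$, from which the real affinity of $f|_{F_e}$ follows. I expect the main obstacle to be Stage~2, namely pinning down $u=e'\in B$ from the combination of $\|a\|<1$, the order-preservation of $\phi_f$, and the explicit orthogonality properties of $u-e'$; once this is in place, Stage 3 is a fairly standard Mankiewicz argument.
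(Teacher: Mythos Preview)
Your argument is correct, and Stage~2 takes a genuinely different (and cleaner) route than the paper. The paper establishes the \emph{opposite} inequality $u\le e'$: from $f(e)=u+P_0(u)(f(e))$ it deduces $u^*u\le \chi_{\{1\}}(|f(e)|)$, hence $u\le w=e'$, and then rules out $u<e'$ by an involved contradiction that uses the open/compact projection machinery to manufacture a norm-one $x\in B$ with $x\perp u$, together with Proposition~\ref{p l 3.4 new new} and a delicate computation showing $f(e)-x\in F_u$ with $\|f(e)-x\|=1$. You instead prove $e'\le u$ directly from the order-preserving bijection $\phi_f$ (via $f(e)\in F_{e'}\Rightarrow e\in F_{e_0}\Rightarrow e_0\le e\Rightarrow e'=\phi_f(e_0)\le\phi_f(e)=u$), and then the orthogonal splitting $a=(u-e')+z$ combined with $\|a\|<1$ kills the strict case in one line. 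This bypasses both the open-projection construction and Proposition~\ref{p l 3.4 new new}. (Amusingly, the two inequalities together would give $u=e'$ with no contradiction argument at all.) Your Stage~3 matches the paper's Mankiewicz step; you are just more explicit about why the affine extension has zero constant term, which the paper leaves implicit in its appeal to a ``surjective real linear isometry'' coming from Mankiewicz.
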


\begin{proof} Let us pick a non-zero partial isometry $e$ in $A$, and let $f(e) = v |f(e)|$ be the polar decomposition of $f(e)$ in $B^{**}$. By Theorem \ref{t surjective isometries map partial isometries into points of strong subdiff}, $1$ is an isolated point in $\sigma (|f(e)|)$.\smallskip

As in the proof of the previous theorem, by Theorem \ref{t first correspondence between faces and compact partial isometries in the bidual for a surjective isometry} we can find a (unique) non-zero compact partial isometry $u=\phi_f (e)\in B^{**}$ such that \begin{equation}\label{eq faces thm 2} f(F_e) = f((e + A_0(e)) \cap \mathcal{B}_A)    = F_u= (u + B_0^{**}(e)) \cap \mathcal{B}_B.
\end{equation}

We shall first prove that $u\in B$. As before, we shall identify the C$^*$-subalgebra $B_{|f(e)|}$ generated by $|f(e)|$ with $C_0 (\sigma (|f(e)|))$. Since $1$ is isolated in $\sigma (|f(e)|)$, the element ${w} = v \chi_{\{1\}} (|f(e)|)$ is a partial isometry in $B$ (cf. \cite[Lemma 2.1]{AkPed77}). Having in mind that $f(e) \in F_u$, we have $f(e) = u + P_0(u) (f(e))$. Since ${w} = v \chi_{\{1\}} (|f(e)|)$, $|f(e)| = |u| + |P_0(u) (f(e))|$ with $|u|\perp |P_0(u) (f(e))|$, and hence $$u \leq {w} = v \chi_{\{1\}} (|u|) + v \chi_{\{1\}} (|P_0(u) (f(e))|).$$

If the partial isometry ${w}-u$ is non-zero, the projection $({w}-u)^* ({w}-u) = {w}^*{w} - u^* u $ lies in $B^{**}$ and it is bounded by ${w}^* {w},$ which is a projection in $B$. The (unital) hereditary C$^*$-subalgebra $ {w}^* {w} B {w}^* {w}$ is weak$^*$ dense in ${w}^* {w} B^{**} {w}^* {w}$ (just observe that the product of $B^{**}$ is separately weak$^*$ continuous by Sakai's theorem \cite[Theorem 1.7.8]{S}). Since $u^*u$ is compact in $B^{**}$, the projection $1-u^* u$ is open in $B^{**}$, and hence ${w}^*{w} - u^* u = {w}^*{w} (1-u^*u) {w}^*{w}$ is an open projection in $ {w}^* {w} B^{**} {w}^* {w}$. We can therefore find a positive norm-one element ${a}\leq {w}^*{w} - u^* u $ in  ${w}^* {w} B {w}^* {w}$. The element ${x} = {w} {a}$ lies in $B$, has norm one, and ${x} \perp u$ because $u {x}^* = u {a} {w}^* =   u ({w}^*{w} - u^* u) {a} {w}^*=0$, and ${x}^* u = {a} {w}^* u = {a}  {u}^* u = {a} ({w}^*{w} - u^* u) {u}^* u =0.$\smallskip

In these circumstances, by Proposition \ref{p l 3.4 new new} we have $$2=\|f(e)+{x}\|=\|f(e)-(-{x})\| =\| e - f^{-1}(-{x})\| = \| P_2 (e) (e - f^{-1}(-{x})) \|.$$ Let us observe that, since $u\leq w\leq v$, we can write $$f(e) -x = u + P_0(u) (f(e)) - w  a = u + (1-uu^*) v |f(e)| (1-u^* u) - w (w^*w -u^*u) a $$ $$= u + v (1-u^*u) |f(e)| (1-u^* u) - v (w^*w -u^*u) a (w^*w -u^*u)$$ $$= u + v (v^*v-u^*u) |f(e)| (v^*v-u^* u) - v (v^*v -u^*u) a (v^*v -u^*u)$$ $$= u + v (v^*v-u^*u) (|f(e)|-a) (v^*v-u^* u).$$ Since $|f(e)|$ and $a$ are positive elements in the closed unit ball of the C$^*$-algebra $(w^*w -u^*u) B^{**}(w^*w -u^*u)$, we have $-1 \leq -a \leq |f(e)|-a\leq |f(e)|\leq 1$, and consequently $\| |f(e)|-a\|\leq 1$. Now, the orthogonality of the elements $u$ and $v (v^*v-u^*u) (|f(e)|-a) (v^*v-u^* u)$ implies that $\|f(e)-x\| =1$. It follows that $$ f(e) -x = u + v (v^*v-u^*u) (|f(e)|-a) (v^*v-u^* u)\in F_u.$$

We have show in the previous paragraph that $f(e)-{x} \in F_u$. Denoting by $z= f^{-1} (f(e)-{x} )$, we get $z\in F_e$, and $$1=\|f(e)\|=\|f(e)-{x}+{x}\|=\|(f(e)-{x})-(-{x})\|=\|z- f^{-1}(-{x})\|$$ $$=\|e+ P_0(e)(z)- f^{-1}(-{x})\|\geq  \|P_2 (e) (e+ P_0(e)(z)- f^{-1}(-{x}))\| $$ $$=\|P_2 (e) (e - f^{-1}(-{x}))\|=2,$$ which is impossible.\smallskip

The previous arguments show that $\phi_{f}(e)\in B$ for every non-zero partial isometry $e\in A$. A standard argument based on previous contributions (cf. \cite{PeTan16} and \cite{FerPe17b,FerPe17c}) and Mankiewicz's theorem (see \cite{Mank1972}) can be now applied to conclude the proof. We sketch an argument. Pick a non-zero partial isometry $e\in A$ with $u=\phi_{f}(e)\in B$. Since $$f\left( e + \mathcal{B}_{A_0(e)} \right)=f((e + A_0(e)) \cap \mathcal{B}_A ) = F_u = (u + B_0(u)) \cap \mathcal{B}_B = u + \mathcal{B}_{B_0(u)},$$ denoting by $\mathcal{T}_{x_0}$ the translation with respect to $x_0$ (i.e. $\mathcal{T}_{x_0} (x) = x+x_0$), the mapping $f_{e} = \mathcal{T}_{u}^{-1}|_{F_u} \circ f|_{F_e} \circ \mathcal{T}_{e}|_{\mathcal{B}_{A_0(e)}}$ is a surjective isometry from $\mathcal{B}_{A_{0} (e)}$ onto $\mathcal{B}_{B_0(u)}$. Mankiewicz's theorem \cite{Mank1972} implies the existence of a surjective real linear isometry $T_{e} : A_0(e) \to  B_0(u)$ such that $f_{e} = T_{e}|_{\mathcal{B}_{A_0(e)}}$ and hence $$ f(e + x) = u + T_e (x), \hbox{ for all $x$ in } \mathcal{B}_{A_0(e)}.$$ In particular $f(e) = u=\phi_{f}(e)$. Now, since $$f|_{F_e}= \mathcal{T}_{u}|_{\mathcal{B}_{B_0(u)}} \circ f_{e} \circ \mathcal{T}_{e}^{-1}|_{F_e}= \mathcal{T}_{u}|_{\mathcal{B}_{B_0(u)}} \circ T_{e} \circ \mathcal{T}_{e}^{-1}|_{F_e},$$ we deduce that $f|_{F_e}$ is a real affine function.
\end{proof}

We return to the mapping given by Theorem \ref{t first correspondence between faces and compact partial isometries in the bidual for a surjective isometry} to explore some additional properties. Our next result, which asserts that the mapping $\phi_f$ preserves antipodal points for minimal partial isometries in $A^{**}$, is a crucial step in our arguments.

\begin{theorem}\label{t B} Let $f: S(A) \to S(B)$ be a surjective isometry between the unit spheres of two C$^*$-algebras, and let $\phi_f$ be the mapping given by Theorem \ref{t first correspondence between faces and compact partial isometries in the bidual for a surjective isometry}. Then, for each minimal partial isometry $v$ in $A^{**}$ we have $\phi_f (-v) = - \phi_f (v)$.
\end{theorem}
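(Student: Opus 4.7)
The strategy is to set $u := \phi_f(v)$ and $w := \phi_f(-v)$, both minimal compact partial isometries in $B^{**}$ by Theorem \ref{t first correspondence between faces and compact partial isometries in the bidual for a surjective isometry}, and to identify the support functional of $w$ as the negative of that of $u$, so that the bijection between extreme points of $\mathcal{B}_{B^*}$ and minimal partial isometries in $B^{**}$ forces $w = -u$.

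The starting point is that the faces $F_v$ and $F_{-v}$ are already antipodal in $\mathcal{B}_A$. By \eqref{eq support pure atoms and minimal} there is a unique extreme point $\varphi_v \in \partial_e(\mathcal{B}_{A^*})$ with $F_v = \{\varphi_v\}_{_{'}}$; the functional $-\varphi_v$ is also extreme, and since $(-\varphi_v)(-v) = 1$ it must, by uniqueness, be the extreme point associated with $-v$. Hence $F_{-v} = \{a \in \mathcal{B}_A : \varphi_v(a) = -1\}$, so $\varphi_v(x-y) = 2$ for every $x \in F_v$ and $y \in F_{-v}$, giving $\|x-y\| = 2$. Because $f$ is a surjective isometry with $f(F_v) = F_u$ and $f(F_{-v}) = F_w$, the relation transfers: $\|x' - y'\| = 2$ for all $x' \in F_u$ and $y' \in F_w$.

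The crucial step is to extract a single supporting functional witnessing this antipodality simultaneously for every such pair. Set $C := \{x' - y' : x' \in F_u,\ y' \in F_w\}$, a non-empty convex subset of the sphere of radius $2$ of $B$, and for each $c \in C$ let $K_c := \{\varphi \in \mathcal{B}_{B^*} : \varphi(c) = 2\}$. Each $K_c$ is non-empty by Hahn-Banach and weak$^*$ closed. The family $\{K_c\}_{c \in C}$ has the finite intersection property: given $c_1,\ldots,c_n \in C$, the average $\bar c = \frac{1}{n}(c_1 + \cdots + c_n)$ still lies in $C$ (so $\|\bar c\|=2$), and any $\varphi \in K_{\bar c}$ satisfies $\frac{1}{n}\sum \varphi(c_i) = 2$ with each $|\varphi(c_i)| \leq 2$, which forces $\varphi(c_i) = 2$ for every $i$. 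Weak$^*$ compactness of $\mathcal{B}_{B^*}$ now produces a common $\varphi \in \bigcap_{c \in C} K_c$. A short analysis of real parts and moduli converts the identity $\varphi(x') - \varphi(y') = 2$ with $|\varphi(x')|, |\varphi(y')| \leq 1$ into $\varphi(x') = 1$ for every $x' \in F_u$ and $\varphi(y') = -1$ for every $y' \in F_w$.

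The first of these identities says $\varphi \in (F_u)^{\prime}$. The Galois duality from Theorem \ref{t faces AkPed} identifies $(F_u)^{\prime}$ with $\{u\}_{_{'}}$, which by \eqref{eq support pure atoms and minimal} (applied to the minimal $u$) is the singleton $\{\varphi_u\}$; hence $\varphi = \varphi_u$. Analogously $-\varphi \in (F_w)^{\prime} = \{\varphi_w\}$, so $\varphi_w = -\varphi_u$. Since $(-\varphi_u)(-u) = \varphi_u(u) = 1$, uniqueness of the extreme support functional gives $\varphi_w = \varphi_{-u}$, and the bijection between extreme points of $\mathcal{B}_{B^*}$ and minimal partial isometries in $B^{**}$ yields $w = -u$, i.e.\ $\phi_f(-v) = -\phi_f(v)$. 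The main subtlety is the finite intersection / weak$^*$ compactness argument in the previous paragraph: without it one would only obtain, separately for each pair $(x',y')$, its own witnessing functional, and that would not suffice to identify $\varphi$ with the support functional of $u$.
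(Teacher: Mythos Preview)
Your proof is correct and takes a genuinely different, more direct route than the paper's.

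Both arguments start from the same geometric fact: since $F_v$ and $F_{-v}$ are antipodal (every pair is at distance $2$) and $f$ is an isometry, the image faces $F_u$ and $F_w$ are also antipodal. From there the two proofs diverge. The paper fixes the support functional $\varphi_1$ of $w=\phi_f(-e)$ a priori and proves by contradiction that $\varphi_1(x)=-1$ for every $x\in F_{\phi_f(e)}$; this requires building decreasing nets $(z_\lambda)$, $(y_\lambda)$ in $B$ approximating the compact partial isometry $\phi_f(-e)$, a topological argument on the nested weak$^*$-compact sets $C_\lambda=\{y_\lambda\}'$, and then a second net argument to pass from elements of $F_{\phi_f(e)}$ to $\phi_f(e)$ itself, finishing with the Peirce-$2$ identity $P_2(\phi_f(-e))(\cdot)=\varphi_1(\cdot)\phi_f(-e)$. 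Your argument instead exploits the convexity of the difference set $C=F_u-F_w$ together with Banach--Alaoglu: the finite intersection property of the slices $K_c$ produces, in one stroke, a single $\varphi\in\mathcal{B}_{B^*}$ with $\varphi\equiv 1$ on $F_u$ and $\varphi\equiv -1$ on $F_w$, and then the facial duality of Theorem~\ref{t faces AkPed} (which forces $(F_u)'=\{u\}_{'}$ to be the singleton $\{\varphi_u\}$ for a minimal $u$) identifies $\varphi$ immediately. This bypasses both the contradiction argument and all of the net approximations; the only point that deserves a word of justification is the identity $(F_u)'=\{u\}_{'}$, but that follows from the anti-order bijection in Theorem~\ref{t faces AkPed} exactly as you indicate.
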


\begin{proof} By Kadison's transitivity theorem every minimal partial isometry $e\in A^{**}$ is compact (cf. comments in page \pageref{Kadison transitivity}). Theorem \ref{t first correspondence between faces and compact partial isometries in the bidual for a surjective isometry}, $\phi_f (e)$ and $\phi_f (-e)$ are minimal partial isometries in $B^{**}$. The same arguments in page \pageref{Kadison transitivity} show the existence of two functionals $\varphi_0$ and $\varphi_1$ in $\partial_{e}(\mathcal{B}_{B^*})$, and a norm-one element $y\in B$ such that $$y= \phi_f (-e) + P_0(\phi_f(-e)) (y), $$ $$\{\phi_f (e)\}_{_{'}} =\{\varphi_0\}, \hbox{ and } \{\phi_f (-e)\}_{_{'}} =\{\varphi_1\}.$$

\begin{equation}\label{C1}\hbox{We claim that for each norm-one element $x\in B$ such that}\hspace{2.4cm}
\end{equation} $$x= \phi_f (e) + P_0(\phi_f(e)) (x) \hbox{ we have  }\varphi_1 (x) = -1.$$

Otherwise, there exists $x$ as above satisfying $|\varphi_1 (x)+1| =\varepsilon>0$. Since $x\in B$ the set $\mathcal{O} = \{ \psi\in B^* : |\psi (x)+1| > \frac{\varepsilon}{2} \}$ is a weak$^*$ open neighborhood of $\varphi_1$ in $B^*$.\smallskip

Since $\phi_f (-e) \phi_f (-e)^*$ is a compact projection in $B^{**}$, we can find a decreasing net $(z_{\lambda})_{\Lambda}$ of positive elements in $B$ converging to $\phi_f (-e) \phi_f (-e)^*$ in the weak$^*$ topology of $B^{**}$, with $\|z_{\lambda}\|=1$ and $\phi_f (-e) \phi_f (-e)^*\leq z_{\lambda},$ for every $\lambda$. Since the product of $B^{**}$ is separately weak$^*$ continuous, the net $(y_{\lambda})=(z_{\lambda} y)$ converges to $\phi_f (-e) \phi_f (-e)^* y = \phi_f (-e)$ in the weak$^*$ topology of $B^{**}$.\smallskip

For each $\lambda \in \Lambda$ we set $C_{\lambda} :=\{y_{\lambda}\}'= \{\psi\in B^* : \psi (y_{\lambda})=1 =\|\psi\| \}$, $C_{0} = \{y\}'= \{\psi\in B^* : \psi (y)=1 =\|\psi\| \}$. Clearly $C_0$ and $C_{\lambda}$ are non-empty weak$^*$ closed faces of $\mathcal{B}_{B^{*}}$. Let us fix $\lambda_1\leq \lambda_2$ in $\Lambda$, and $\psi\in C_{\lambda_2}$. Since $1 = \psi (y_{\lambda_2}) = \psi (z_{\lambda_2} y)$ and $z_{\lambda_2}$ is a norm-one positive element in $B$, the functional $\psi ( \cdot y)$ is a state in $B^{*}$. Therefore $1 = \psi (y_{\lambda_2}) = \psi (z_{\lambda_2} y) \leq \psi (z_{\lambda_1} y)\leq \psi (1 y)\leq 1,$ and then $$ C_{\lambda_2} \subseteq C_{\lambda_1} \subseteq C_0.$$ Furthermore, since $(y_{\lambda})\to \phi_f (-e)$ in the weak$^*$ topology, it is not hard to check that $ \{\varphi_1\} = \{\phi_f (-e)\}_{_{'}} =\bigcap_{\lambda} C_{\lambda} .$ Therefore, $\{C_{\lambda}: \lambda\in \Lambda\}$ is a decreasing family of non-empty weak$^*$ compact sets with non-empty finite intersections, whose intersection is  $\{\varphi_1\}\subset \mathcal{O},$ and the latter is weak$^*$ open. A standard topological argument proves the existence of $\lambda_0\in \Lambda$ such that $C_{\lambda_0}=\{y_{\lambda_0}\}'\subset\mathcal{O}$.\smallskip

Now, applying that $x= \phi_f (e) + P_0(\phi_f(e)) (x) \in \{\phi_f(e)\}_{_{''}}=F_{\phi_f(e)} = f(F_e)$, where $F_e$ and $F_{\phi_f (e)}$ denote the faces of $\mathcal{B}_{A}$ and $\mathcal{B}_{B}$ defined by $e$ and $\phi_f(e)$, respectively (see Theorem \ref{t first correspondence between faces and compact partial isometries in the bidual for a surjective isometry}), we can find $a\in F_e$ such that  $f(a) =x$.\smallskip

On the other hand, $y_{\lambda_0} = z_{\lambda_0} y =\phi_f (-e) + P_0(\phi_f(-e)) (y_{\lambda_0})\in \{\phi_f(-e)\}_{_{''}}=F_{\phi_f(-e)} = f(F_{-e}).$ Let us pick $b\in F_{-e}$ satisfying $f(b) = y_{\lambda_0}$. By the hypothesis on $f$ we have $$\| x-y_{\lambda_0}\| = \|a-b\| = \| e + P_0(e) (a) - (-e + P_0(-e) (b))\| = \|2 e + P_0(e) (a-b)\|=2,$$ which assures that $\displaystyle \Big\|\frac{x-y_{\lambda_0}}{2}\Big\| =1$. An application of the Hahn-Banach theorem combined with Krein-Milman's theorem proves the existence of a functional $\psi\in \partial_{e}(\mathcal{B}_{B^*})$ satisfying $\psi \Big(\frac{-x+y_{\lambda_0}}{2}\Big) =1$. Therefore $\psi( y_{\lambda_0}) =1=- \psi( x),$ and hence $\psi\in C_{\lambda_0}=\{y_{\lambda_0}\}'\subset\mathcal{O}$, and $0=|\psi (x)+1| =\varepsilon>0$, which is impossible. This finishes the proof of \eqref{C1}.\smallskip

By the arguments in page \pageref{Kadison transitivity} we can find norm-one element $x\in B$ such that $x= \phi_f (e) + P_0(\phi_f(e)) (x)$. Since $\phi_f (e) \phi_f (e)^*$ is a compact projection in $B^{**}$, we can find a decreasing net $(h_{\lambda})_{\Lambda}$ of positive elements in $B$ converging to $\phi_f (e) \phi_f (e)^*$ in the weak$^*$ topology of $B^{**}$, with $\|h_{\lambda}\|=1$ and $\phi_f (e) \phi_f (e)^*\leq h_{\lambda},$ for every $\lambda$. By the separate weak$^*$ continuity of the product in $B^{**}$, the net $(x_{\lambda})=(h_{\lambda} x)$ converges to $\phi_f (e) \phi_f (e)^* x = \phi_f (e)$ in the weak$^*$ topology of $B^{**}$. Each $x_\lambda$ has norm-one and satisfies $x_{\lambda}=h_{\lambda} x = \phi_f (e) + P_0(\phi_f(e)) (x_{\lambda}).$ By applying \eqref{C1} to each $x_{\lambda}$ we deduce that $\varphi_1 (x_{\lambda}) = -1,$ for every $\lambda$. Since $(x_{\lambda})\to \phi_f (e)$ in the weak$^*$ topology of $B^{**}$, it follows that $\varphi_1 (\phi_f (e)) =-1$.\smallskip

Finally, by the minimality of $\phi_f (-e)$ in $B^{**}$ it is known that $P_2 (\phi_f (-e)) (x) = \varphi_1 (x) \phi_f (-e)$, for every $x\in B^{**}$, and thus $P_2 (\phi_f (-e)) (\phi_f (e)) = - \phi_f (-e)$. By minimality $\phi_f (-e) = -\phi_f (e)$ (compare, for example, \cite[Lemma 3.1]{FerPe18} or \cite[Lemma 1.6 and Corollary 1.7]{FriRu85}).
\end{proof}

We continue with a technical proposition derived from the results about the facial structure of the closed unit ball of a C$^*$-algebra.

\begin{proposition}\label{p technical} Let $e$ and $v$ be compact partial isometries in the bidual, $A^{**}$, of a C$^*$-algebra $A$. \begin{enumerate}[$(a)$] \item If for each $\varphi\in \partial_{e} (\{e\}_{_{'}})$ we have $\varphi (v) =1$, then $v = e + P_0 (e) (v)$;
\item If for each minimal partial isometry $w\in B^{**}$ with $e = w + P_0(w) (e)$ we have $v = w + P_0(w) (v)$, then $v = e + P_0 (e) (v)$.
\end{enumerate}
\end{proposition}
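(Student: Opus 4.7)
The plan is to establish $(a)$ directly via a Krein--Milman argument and then to deduce $(b)$ from $(a)$, after observing that the two hypotheses are equivalent reformulations of the same condition on $v$.

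For $(a)$, I would regard $\{e\}_{\prime}\subseteq \mathcal{B}_{A^*}$ as a non-empty weak$^*$ compact convex face of $\mathcal{B}_{A^*}$ (non-emptiness follows by applying \eqref{eq support pure atoms and minimal} to any minimal partial isometry dominated by $e$). Being a face of $\mathcal{B}_{A^*}$, its extreme points coincide with the extreme points of $\mathcal{B}_{A^*}$ that lie inside it, and by \eqref{eq support pure atoms and minimal} these are exactly the pure functionals $\varphi_{w}$ associated to the minimal partial isometries $w\in A^{**}$ with $w\leq e$. Because $v\in A^{**}$, the evaluation $\phi\mapsto \phi(v)$ is weak$^*$ continuous and affine on $A^*$, so the hypothesis $\varphi(v)=1$ for every $\varphi\in\partial_{e}(\{e\}_{\prime})$ propagates by Krein--Milman to $\phi(v)=1$ for every $\phi\in\{e\}_{\prime}$. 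To finish, I would invoke the Edwards--R\"uttimann description of weak$^*$ closed faces of $\mathcal{B}_{A^{**}}$ (applied in the W$^*$-algebra $A^{**}$ with predual $A^*$): the face corresponding to the partial isometry $e$ is precisely $\{x\in\mathcal{B}_{A^{**}} : \phi(x)=1 \ \forall\, \phi\in\{e\}_{\prime}\}=\{x\in\mathcal{B}_{A^{**}} : xe^*=ee^*\}= e + (1-ee^*)\mathcal{B}_{A^{**}}(1-e^*e)$. Hence $v$ lies in this face, i.e.\ $v = e + P_0(e)(v)$.

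For $(b)$, I would use that for every minimal partial isometry $w\leq e$ in $A^{**}$ the Peirce-$2$ projection collapses to the rank-one map $P_{2}(w)(x)=\varphi_{w}(x)\,w$. The hypothesis $v = w + P_0(w)(v)$, equivalently $P_2(w)(v)=w$, therefore forces $\varphi_w(v)=1$. Since, as noted above, $\{\varphi_w : w \text{ minimal},\ w\leq e\}= \partial_{e}(\{e\}_{\prime})$, the hypothesis of $(a)$ is in force and the conclusion follows directly from $(a)$.

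The main conceptual step, and essentially the only mildly delicate one, is the identification of $\partial_{e}(\{e\}_{\prime})$ with the pure functionals attached to the minimal partial isometries dominated by $e$; once this is in place, everything else is a clean application of the Krein--Milman theorem together with the weak$^*$/face duality between $\mathcal{B}_{A^{**}}$ and $\mathcal{B}_{A^*}$ encoded in Theorem \ref{t faces AkPed} and \eqref{eq support pure atoms and minimal}.
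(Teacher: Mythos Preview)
Your reduction of $(b)$ to $(a)$ is correct and matches the paper. The problem is in your proof of $(a)$: the assertion that ``the evaluation $\phi\mapsto \phi(v)$ is weak$^*$ continuous'' on $A^{*}$ is false in general. A linear functional on $A^{*}$ is $\sigma(A^{*},A)$-continuous precisely when it comes from an element of $A$, and here $v$ is only assumed to lie in $A^{**}$ (indeed, it is merely a \emph{compact} partial isometry in $A^{**}$, not an element of $A$). Consequently, knowing $\varphi(v)=1$ on $\partial_e(\{e\}_{'})$ does \emph{not} propagate through the weak$^*$ closure in the Krein--Milman theorem to all of $\{e\}_{'}$. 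This is exactly why the hypothesis that $v$ is compact is in the statement; your argument never uses it.

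The paper repairs this as follows. For any norm-one $y\in A$ with $y\in F_v$ (i.e.\ $y=v+P_0(v)(y)$), the hypothesis $\varphi(v)=1$ forces $\varphi(y)=\varphi(vv^*y)=\varphi(v)=1$ for every $\varphi\in\partial_e(\{e\}_{'})$; since $y\in A$, the map $\phi\mapsto\phi(y)$ \emph{is} weak$^*$ continuous, and now Krein--Milman legitimately gives $\phi(y)=1$ for every $\phi\in\{e\}_{'}$. Compactness of $v$ then supplies a net $(y_\lambda)$ of such elements in $A\cap F_v$ with $y_\lambda\to v$ in $\sigma(A^{**},A^{*})$; evaluating at each fixed $\phi\in\{e\}_{'}\subset A^{*}$ and passing to the limit yields $\phi(v)=1$, whence $\{e\}_{'}\subseteq\{v\}_{'}$ and $e\leq v$ as desired. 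The compactness of $v$ is thus the missing ingredient, not a cosmetic hypothesis.
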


\begin{proof} Since $e$ is compact the set $\{e\}_{_{'}}$ is a non-empty weak$^*$ closed, and hence weak$^*$ compact, face of $\mathcal{B}_{A^{*}}$ (see Theorem \ref{t faces AkPed}). By the Krein-Milman theorem $\{e\}_{_{'}} = \overline{co}^{w^*} \partial_{e} (\{e\}_{_{'}}).$ By hypothesis, for each norm-one element $y\in A$ with $y = v+P_0(v) (y)$ and each $\varphi\in \partial_{e} (\{e\}_{_{'}})$ we have $1=  \varphi (v)$, and then $\varphi (x) = \varphi (vv^* x)$ for all $x\in A^{**}$, which implies that $\varphi (y) = \varphi (vv^* y) = \varphi (v) =1.$ By considering the weak$^*$ closed convex hull of $\partial_{e} (\{e\}_{_{'}})$, and having in mind that $y\in A$, we get $\varphi (y)=1,$ for every $y$ as above and every $\varphi \in \{e\}_{_{'}}$.\smallskip

Since $v$ is compact, arguing as in the proof of Theorem \ref{t B} we can find a net $(y_{\lambda})$ of norm-one elements in $A$ such that $y_{\lambda} = v +P_0(v) (y_{\lambda})$ for every $\lambda$ and $(y_{\lambda})\to v$ in the weak$^*$ topology of $A^{**}$. By the previous paragraph, for each $\lambda,$ we have $\varphi(y_{\lambda})=1$ for all $\varphi\in \{e\}_{_{'}}$. Taking weak$^*$ limit we prove $\varphi(v)=1$ for all $\varphi\in \{e\}_{_{'}}$, equivalently, $\{e\}_{_{'}}\subseteq \{v\}_{_{'}} \Leftrightarrow \{v\}_{_{''}} = F_v\subseteq \{e\}_{_{''}}=F_e \Leftrightarrow v = e + P_0 (e) (v)\Leftrightarrow  e \leq v.$\smallskip

The final statement is clear because, as we commented before, the element in $ \partial_{e} (\{e\}_{_{'}})$ are in one-to-one correspondence with those minimal partial isometries $w\in B^{**}$ with $e = w + P_0(w) (e)$.
\end{proof}

As a consequence of the above result we can derive now a result in the line started by Tingley.

\begin{theorem}\label{t C} Let $A$ and $B$ be C$^*$-algebras, and let $f: S(A) \to S(B)$ be a surjective isometry. Then, for each non-zero compact partial isometry $e$ in $A^{**}$ we have $\phi_f (-e) = -\phi_f (e)$, where $\phi_f$ is the mapping given by Theorem \ref{t first correspondence between faces and compact partial isometries in the bidual for a surjective isometry}. Consequently, for each nono-zero partial isometry $e\in A$ we have $f(-e) = -f(e)$.
\end{theorem}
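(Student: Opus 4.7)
The plan is to bootstrap from Theorem \ref{t B}, which handles the minimal case, to the case of arbitrary non-zero compact partial isometries by means of the technical Proposition \ref{p technical}$(b)$, exploiting that $\phi_f$ is an order-preserving bijection on the sets of non-zero compact partial isometries (Theorem \ref{t first correspondence between faces and compact partial isometries in the bidual for a surjective isometry}$(b)$).

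First, I fix a non-zero compact partial isometry $e\in A^{**}$ and set $v := -\phi_f(e)$ and $\widetilde{e} := \phi_f(-e)$; note that both are non-zero compact partial isometries in $B^{**}$ (compactness is preserved under $w\mapsto -w$, and $\phi_f$ preserves compactness by construction). My target is to show $v = \widetilde{e}$, which by the antisymmetry of the partial order on partial isometries reduces to proving $\widetilde{e}\leq v$ and $v\leq \widetilde{e}$.

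To check $\widetilde{e}\leq v$, I apply Proposition \ref{p technical}$(b)$ with $e$ replaced by $\widetilde{e}$ and $v$ as above. So let $w\in B^{**}$ be an arbitrary minimal partial isometry with $w\leq \widetilde{e}=\phi_f(-e)$. Since $\phi_f$ is an order-preserving bijection on the set of non-zero compact partial isometries, and sends minimals to minimals (Theorem \ref{t first correspondence between faces and compact partial isometries in the bidual for a surjective isometry}$(c)$), there exists a unique minimal partial isometry $u\in A^{**}$ with $u\leq -e$ and $\phi_f(u)=w$. Then $-u$ is a minimal partial isometry in $A^{**}$ with $-u\leq e$, so $\phi_f(-u)\leq \phi_f(e)$. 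By Theorem \ref{t B} (the minimal case), $\phi_f(-u)=-\phi_f(u)=-w$, hence $-w\leq \phi_f(e)$, equivalently $w\leq -\phi_f(e)=v$. Proposition \ref{p technical}$(b)$ then yields $\widetilde{e}\leq v$.

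For the reverse inequality $v\leq \widetilde{e}$, I run exactly the same argument with the roles of $e$ and $-e$ swapped, i.e., I apply Proposition \ref{p technical}$(b)$ this time with $v=-\phi_f(e)$ in the role of the lower element. If $w\leq v=-\phi_f(e)$ is minimal, then $-w\leq \phi_f(e)$, so there is a minimal $u\in A^{**}$ with $u\leq e$ and $\phi_f(u)=-w$; Theorem \ref{t B} gives $\phi_f(-u)=w$, and $-u\leq -e$ forces $w=\phi_f(-u)\leq \phi_f(-e)=\widetilde{e}$. Proposition \ref{p technical}$(b)$ gives $v\leq \widetilde{e}$, hence $\phi_f(-e)=-\phi_f(e)$. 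Finally, for a non-zero partial isometry $e\in A$, Theorem \ref{t A} tells us that $\phi_f(e)=f(e)$ and (applied to the partial isometry $-e\in A$) $\phi_f(-e)=f(-e)$, so $f(-e)=\phi_f(-e)=-\phi_f(e)=-f(e)$.

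The only place where a genuine obstacle could arise is checking that the order-bijection $\phi_f$ really restricts to a bijection between the sets of minimal partial isometries below $e$ in $A^{**}$ and below $\phi_f(e)$ in $B^{**}$; but this is immediate from Theorem \ref{t first correspondence between faces and compact partial isometries in the bidual for a surjective isometry}$(b)$-$(c)$ and the fact that $\phi_{f^{-1}}=(\phi_f)^{-1}$, so no extra work is needed.
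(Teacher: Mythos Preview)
Your proof is correct and follows essentially the same route as the paper's: both arguments bootstrap from the minimal case (Theorem \ref{t B}) by showing that every minimal partial isometry below $\phi_f(-e)$ lies below $-\phi_f(e)$, and then invoke Proposition \ref{p technical}$(b)$ to conclude $\phi_f(-e)\leq -\phi_f(e)$, with the reverse inequality obtained by swapping $e$ and $-e$. The only cosmetic difference is that the paper obtains the second inequality by literally substituting $-e$ for $e$ in the first (and then implicitly using that negation preserves the partial order on partial isometries), whereas you spell out the second application of Proposition \ref{p technical}$(b)$ directly; the content is identical.
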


\begin{proof} Let $e$ be a non-zero compact partial isometry in $A^{**}$. The elements $\phi_f (e)$ and $\phi_f(-e)$ are compact partial isometries in $B^{**}$. Let us pick an arbitrary minimal partial isometry $\hat{w}$ in $B^{**}$ such that $\phi_f(-e) = \hat{w} + P_0(\hat{w}) (\phi_f(-e))$. Theorem \ref{t first correspondence between faces and compact partial isometries in the bidual for a surjective isometry}$(c)$ assures the existence of a minimal partial isometry $w$ in $A^{**}$ with $\phi_f (w) =\hat{w}$. A new application of Theorem \ref{t first correspondence between faces and compact partial isometries in the bidual for a surjective isometry}$(b)$ tells that  $-e = {w} + P_0({w}) (-e)$ (equivalently, $e = {-w} + P_0({-w}) (e)$), and Theorem \ref{t B} gives $-\phi_f(w)=\phi_f(-w),$ and thus by Theorem \ref{t first correspondence between faces and compact partial isometries in the bidual for a surjective isometry} $$ \phi_f(e) = \phi_f({-w}) + P_0(\phi_f ({-w})) (\phi_f(e)) =- \phi_f({w}) + P_0(\phi_f ({w})) (\phi_f(e))$$ $$ = -\hat{w} + P_0(\hat{w}) (\phi_f(e)) .$$ We have therefore shown that for each minimal partial isometry $\hat{w}$ in $B^{**}$ such that $\phi_f(-e) = \hat{w} + P_0(\hat{w}) (\phi_f(-e))$ we have $- \phi_f(e) =\hat{w} + P_0(\hat{w}) (-\phi_f(e)).$ Finally, Proposition \ref{p technical} implies that $- \phi_f(e) =\phi_f(-e)+ P_0(\phi_f(-e)) (-\phi_f(e)).$ Replacing $e$ with $-e$ we have $- \phi_f(- e) =\phi_f(e)+ P_0(\phi_f(e)) (-\phi_f(-e)),$ and thus  $- \phi_f(- e) =  \phi_f( e).$\smallskip

The final statement is a consequence of the previous fact and Theorem \ref{t A}.
\end{proof}

We can continue now developing the multiple consequences which can be derived from Theorems \ref{t first correspondence between faces and compact partial isometries in the bidual for a surjective isometry}, \ref{t A} and \ref{t C}. The goal in our minds is to generalize \cite[Theorem 2.7]{FerPe17b} for surjective isometries between the unit spheres of two arbitrary von Neumann algebras.

\begin{proposition}\label{p algebraic elements} Let $f: S(A) \to S(B)$ be a surjective isometry between the unit spheres of two C$^*$-algebras. Then the following statements hold:\begin{enumerate}[$(a)$]\item For each non-zero partial isometry $v$ in $A$, the surjective real linear isometry $$T_v : (1 - vv^*) A  (1 - vv^*)\to  {(1 - f(v)f(v)^*) B (1 - f(v)^*f(v))}$$ given by Theorem \ref{t A} satisfies $f(e) = T_v(e),$ for every non-zero partial isometry $e\in (1 - vv^*) A  (1 - v^* v)$;
\item Let $w_1,\ldots,w_n$ be mutually orthogonal non-zero partial isometries in $A$, and let $\lambda_1,\ldots,\lambda_n$ be positive real numbers with $1=\lambda_1\geq\max\{\lambda_j\}$. Then $$f\left(\sum_{j=1}^n \lambda_j w_j\right) = \sum_{j=1}^n \lambda_j f\left(w_j\right);$$
\item Suppose $v,w$ are mutually orthogonal non-zero partial isometries in $A$ then $T_{v} (x) = T_{w} (x)$ for every $x\in \{v\}^{\perp} \cap \{w\}^{\perp}$;
\item If $A=M$ is a von Neumann algebra, for each non-zero partial isometry $v$ in $A$ we have $f(x) = T_v (x)$ for every $x\in S({(1 - vv^*) A  (1 - v^* v)})$.
\end{enumerate}
\end{proposition}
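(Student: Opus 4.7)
The plan is to prove $(a)$, $(b)$, $(c)$, $(d)$ in that order, each building on its predecessor, with $(d)$ additionally invoking the spectral theorem available in the von Neumann algebra $M$.

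For $(a)$, let $v$ and $e$ be non-zero partial isometries in $A$ with $e\in (1-vv^*)A(1-v^*v)$. Apply Theorem \ref{t A} to $v$, to $-v$ (noting $f(-v)=-f(v)$ by Theorem \ref{t C} and that the Peirce-$0$ corner is unchanged), and to $e$, to obtain via real linearity
\[
f(v\pm e)=f(v)\pm T_v(e),\quad f(-v+e)=-f(v)+T_{-v}(e),\quad f(e\pm v)=f(e)\pm T_e(v).
\]
Since $e+v=v+e$ and $e-v=-v+e$, one reads off $f(e)+T_e(v)=f(v)+T_v(e)$ and $f(e)-T_e(v)=-f(v)+T_{-v}(e)$; adding gives $2f(e)=T_v(e)+T_{-v}(e)$. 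The key observation is that $v-e=v+(-e)$ is the sum of two orthogonal non-zero partial isometries, hence itself a non-zero partial isometry, so Theorem \ref{t C} gives $f(e-v)=f(-(v-e))=-f(v-e)=-f(v)+T_v(e)$. Comparing with the earlier expression $f(e-v)=-f(v)+T_{-v}(e)$ forces $T_{-v}(e)=T_v(e)$, whence $f(e)=T_v(e)$.

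Parts $(b)$ and $(c)$ then follow mechanically. For $(b)$, apply Theorem \ref{t A} to $w_1$; the element $\sum_{j\geq 2}\lambda_j w_j$ lies in the closed unit ball of $(1-w_1w_1^*)A(1-w_1^*w_1)$ by orthogonality and the bound $\max_j\lambda_j\leq 1$, so real linearity of $T_{w_1}$ together with $(a)$ yields $f(\sum_j\lambda_j w_j)=f(w_1)+\sum_{j\geq 2}\lambda_j f(w_j)=\sum_j\lambda_j f(w_j)$ since $\lambda_1=1$. For $(c)$, since $\{v\}^{\perp}\cap\{w\}^{\perp}=\{v+w\}^{\perp}$, compute $f(v+w+x)$ two ways for $x$ in the ball of that corner: via Theorem \ref{t A} for the partial isometry $v+w$ (using $(b)$ to rewrite $f(v+w)=f(v)+f(w)$), and via Theorem \ref{t A} for $v$ applied to the orthogonal sum $w+x$ together with $T_v(w)=f(w)$ from $(a)$. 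Equating yields $T_{v+w}(x)=T_v(x)$, and symmetrically $T_{v+w}(x)=T_w(x)$; real linearity removes the norm-one restriction.

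For $(d)$, take $x\in S((1-vv^*)M(1-v^*v))$ and form the polar decomposition $x=u|x|$ in $M$; a routine verification shows $u$ lies in the corner and $|x|$ is a positive norm-one element of the von Neumann subalgebra $(1-v^*v)M(1-v^*v)$. Using the spectral theorem for $|x|$, partition $[0,1]$ into subintervals of length less than $\varepsilon$ with $1$ the right endpoint of the last, and approximate $|x|$ in norm by $a_\varepsilon=\sum_{i=1}^{n}t_i p_i$, where the $p_i$ are the corresponding spectral projections (dropping any that vanish), $t_i\in[0,1]$, and $t_n=1$, with $p_n\neq 0$ because $1\in\sigma(|x|)$. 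Then the $w_i:=u p_i$ are mutually orthogonal non-zero partial isometries in $\{v\}^{\perp}$, and $x_\varepsilon:=u a_\varepsilon=\sum_i t_i w_i$ satisfies $\|x-x_\varepsilon\|<\varepsilon$. Reordering so that $t_n$ is listed first, $(b)$ combined with $(a)$ gives $f(x_\varepsilon)=\sum_i t_i f(w_i)=\sum_i t_i T_v(w_i)=T_v(x_\varepsilon)$; letting $\varepsilon\to 0$ and using the continuity of $f$ and $T_v$ yields $f(x)=T_v(x)$. The principal obstacle is $(a)$: matching $T_v(e)$ with $T_{-v}(e)$ is not visible from Theorems \ref{t A} and \ref{t C} alone, and the trick is to notice that $v-e$ (rather than $v+e$) is the correct partial isometry to which Theorem \ref{t C} must be applied.
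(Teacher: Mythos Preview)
Your proof is correct. Parts $(b)$, $(c)$, and $(d)$ follow the paper's argument essentially verbatim (your $(c)$ routes through $T_{v+w}$ rather than directly equating the $T_v$- and $T_w$-expansions of $f(v+w+x)$, but this is a cosmetic variation, and your extra care in $(d)$ about arranging $t_n=1$ with $p_n\neq 0$ is exactly what is needed to invoke $(b)$).

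The genuine divergence is in $(a)$. Both the paper and you derive an averaging identity---the paper obtains $2f(v)=T_e(v)+T_{-e}(v)$, you the symmetric $2f(e)=T_v(e)+T_{-v}(e)$---from Theorems \ref{t A} and \ref{t C}. You then finish by applying Theorem \ref{t C} to the partial isometry $v-e$: the resulting identity $f(e-v)=-f(v-e)$, compared with the expressions already obtained for each side, forces $T_{-v}(e)=T_v(e)$ directly and hence $f(e)=T_v(e)$. The paper instead observes that $f(v)$, being an average of elements of $B_0(f(e))$, is orthogonal to $f(e)$; from this it deduces $f(e+v)\in F_{f(e)}\cap F_{f(v)}=F_{f(e)+f(v)}$ and then runs the same reasoning through $f^{-1}$ (applied to the orthogonal pair $f(e),f(v)$) to conclude $f(e+v)=f(e)+f(v)$. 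Your route is shorter and avoids the back-and-forth through $f^{-1}$; the paper's route has the minor side benefit of exhibiting the orthogonality $f(e)\perp f(v)$ along the way, though this also follows a posteriori from your conclusion $f(e)=T_v(e)\in B_0(f(v))$. Incidentally, applying Theorem \ref{t C} to $v+e$ rather than $v-e$ would have worked equally well in your argument.
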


\begin{proof} $(a)$ Let $e,v$ be non-zero partial isometries in $A$ with $e\perp v$, and let $T_v$ and $ T_{\pm e} $ be the corresponding surjective real linear isometries  given by Theorem \ref{t A}. Combining Theorems \ref{t A} and \ref{t C} we have $$\pm f( e)+ T_{\pm e} (v) =f(\pm e)+ T_{\pm e} (v) = f(v\pm e) = f(v) \pm T_v (e),$$ where $T_v (e)\perp f(v),$ and $T_{\pm e} (v)\perp f(\pm e)= \pm f( e)$. Adding both expressions we get $$\displaystyle f(v) =\frac{ T_{e} (v) + T_{-e} (v)}{2}\perp f(e).$$ The facts $f(e+v) = f(e) + T_e (v) = f(v) +T_v(e)$ (see Theorem \ref{t A}) and $f(e)\perp f(v)$ prove that $$f(e+v) = f(e) +f(v) + P_0(f(e) +f(v)) (f(e+v)).$$ The mapping $f^{-1}$ satisfies the same property and hence $$f^{-1} (f(e)+f(v)) = f^{-1} f(e) + f^{-1} f(v) + P_0(f^{-1} f(e) +f^{-1} f(v)) (f^{-1} f(e+v))$$ $$ =e+v + P_0(e+v) (e+v)= e+v,$$ which gives $f(e+v)=f(e)+f(v)$, and in particular $f(e) = T_v(e)$ and $f(v) = T_e (v)$.\smallskip

$(b)$ Under the assumptions of the statement, it follows from $(a)$ and Theorem \ref{t A} that $$f\left(\sum_{j=1}^n \lambda_j w_j\right) = f(w_1) + T_{w_1} \left(\sum_{j=2}^n \lambda_j w_j\right) = f(w_1) +\sum_{j=2}^n \lambda_j T_{w_1} \left( w_j\right)= \sum_{j=1}^n \lambda_j f\left(w_j\right);$$

$(c)$ Let us take $x\in \{v\}^{\perp} \cap \{w\}^{\perp}$. By Theorem \ref{t A} and $(a)$ we deduce that $$f(v) +f (w) + T_{v} (x)= f(v) + T_{v} (w) + T_{v} (x)=f(v) +T_{v} (w+x) $$ $$= f(v+w+x) = f(w) + T_{w} (v+x)= f(w) + T_{w} (v) + T_{w} (x)= f(w) + f (v) + T_{w} (x),$$ which gives the desired equality.\smallskip

$(d)$ We recall that in a von Neumann algebra $M$ every hermitian element can be approximated in norm by finite real linear combinations of mutually orthogonal projections in $M$ (compare \cite[Theorem 1.11.3]{S}). For an arbitrary element $x\in M$ we consider its polar decomposition $x = v |x|$ with $v$ a partial isometry in $M$. Considering the von Neumann algebra $vv^* M vv^*$, it follows that $|x|$ can be approximated in norm by a finite real linear combination of mutually orthogonal projections $q_1,\ldots, q_m$ in $vv^* M vv^*$. We observe that $x$ can be approximated in norm by a finite real linear combination of $vq_1,\ldots, vq_m,$ with positive coefficient, and the latter are mutually orthogonal partial isometries in $M$ (compare \cite[Theorem 3.2]{Harris81} or \cite[Lemma 3.11]{Horn87} for a more general conclusion). Having in mind this property, we deduce from $(a)$ and $(b)$ that $f(x) = T_v (x)$ for each non-zero partial isometry $v$ in $M$ and every $x$ in $S({(1 - vv^*) M  (1 - v^* v)})=S(\{v\}^{\perp})=S(M_0(v))$.
\end{proof}

\section{A synthesis through convex combinations of unitary elements}

We begin this section by reviewing some results on surjective real linear isometries from a C$^*$-algebra into a JB$^*$-triple. We refer to \cite{Ka83}, \cite{FriRu85} and \cite{Da} and references therein for the definition and basic results on JB$^*$-triples. \smallskip

Let $A$ be a JB$^*$-algebra regarded as a real JB$^*$-triple in the sense employed in \cite{FerMarPe}. It is shown in the proof of \cite[Corollary 3.4]{FerMarPe} that $A^{**}$ contains no non-trivial real or complex rank one Cartan factors, and hence by \cite[Theorem 3.2]{FerMarPe} every surjective real linear isometry from $A$ into a real JB$^*$-triple is a triple isomorphism. This fact is gathered in the next result.

\begin{theorem}\label{t FerMarPe}\cite[Theorem 3.2 and Corollary 3.4]{FerMarPe} Let $T: A\to E$ be a surjective real linear isometry from a JB$^*$-algebra onto a JB$^*$-triple. Then $T$ preserves triple products. $\hfill\Box$
\end{theorem}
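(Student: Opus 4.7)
The plan is to deduce this from the real linear version of Kaup's Banach-Stone theorem, combined with the observation that JB$^*$-algebras admit no rank-one Cartan factor as a direct summand of their bidual. First I would recall that every JB$^*$-algebra $A$ carries a canonical (complex) JB$^*$-triple structure via
\[
\{a,b,c\} = (a\circ b^*)\circ c + (c\circ b^*)\circ a - (a\circ c)\circ b^*,
\]
so $T$ may be viewed as a surjective real linear isometry between two real JB$^*$-triples, placing us squarely in the framework of \cite{FerMarPe}.

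The key input is the main structural result of \cite{FerMarPe} (Theorem 3.2): a surjective real linear isometry between two real JB$^*$-triples automatically preserves triple products, provided the biduals contain no non-trivial rank-one real or complex Cartan factor as a direct summand. The rank-one Cartan factors are essentially (real or complex) Hilbert spaces with their natural Hilbertian triple product, and they are precisely the obstruction: a map such as complex conjugation on a complex Hilbert space is a surjective real linear isometry that fails to be a triple homomorphism. Outside of this exceptional family, the real linear analogue of Kaup's theorem applies.

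The decisive verification is therefore to check that when $A$ is a JB$^*$-algebra, $A^{**}$ admits no non-trivial real or complex rank-one Cartan factor as a direct summand. The point is that $A^{**}$ is a unital JBW$^*$-algebra: any triple direct summand of $A^{**}$ is cut out by a central tripotent and so inherits a Jordan unit, whereas a rank-one Cartan factor has a one-dimensional Peirce-$2$ subspace at every tripotent and cannot carry a Jordan unit of this type. This is the content of Corollary 3.4 in \cite{FerMarPe}.

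I expect the rank-one exclusion to be the main obstacle, since the classification of real Cartan factors comprises several families (spin factors, rectangular and symplectic types, the two exceptional ones, and their real forms), and one must single out the rank-one members in each. Once that is in place, applying the real linear Banach-Stone theorem to $T$ directly gives $T\{a,b,c\} = \{T(a),T(b),T(c)\}$ for all $a,b,c\in A$, completing the argument.
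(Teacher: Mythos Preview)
Your proposal is correct and follows exactly the route the paper indicates: the paragraph preceding the theorem explains that the result is obtained by combining \cite[Theorem 3.2]{FerMarPe} (the real linear Banach--Stone theorem under the no-rank-one hypothesis) with the observation, extracted from the proof of \cite[Corollary 3.4]{FerMarPe}, that the bidual of a JB$^*$-algebra contains no non-trivial real or complex rank-one Cartan factors. The theorem itself is stated without proof in the paper (it is a direct citation), so your sketch in fact spells out precisely what the authors leave implicit.
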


The subgroup $\mathcal{U}(A)$ of all unitary elements in a unital C$^*$-algebra $A$ is a special subset of the unit sphere of $A$. This subgroup encrypts a lot of information about $A$. In the proof of \cite[Theorem 4.12]{Tan2017b}, R. Tanaka proves that given a surjective isometry $f : S(M)\to S(N)$, where $M$ and $N$ are finite von Neumann algebras then $f$ preserves the unitary groups, that is, $f(\mathcal{U}(M)) = \mathcal{U}(N)$. As remarked in \cite[comments after Theorem 4.12]{Tan2017b}, the arguments applied to prove the previous equality are strongly based on the fact that $M$ and $N$ are finite, and hence every extreme point of their closed unit balls is a unitary element. We can extend now Tanaka's result for general von Neumann algebras.

\begin{theorem}\label{t Tanaka unitaries general vN} Let $f : S(M)\to S(N)$ be a surjective isometry between the unit spheres of two von Neumann algebras.  Then $f(\mathcal{U}(M)) = \mathcal{U}(N)$.
\end{theorem}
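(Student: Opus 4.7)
The plan is to reduce to the case $u=1$. By applying the same argument to $f^{-1}:S(N)\to S(M)$, it is enough to establish the inclusion $f(\mathcal{U}(M))\subseteq \mathcal{U}(N)$; the reverse inclusion is then automatic. Fix $u\in\mathcal{U}(M)$. Since $u$ is unitary, $1-uu^*=1-u^*u=0$, and hence $F_u=u+(1-uu^*)\mathcal{B}_M(1-u^*u)=\{u\}$. Theorem~\ref{t A} shows that $f(u)$ is a non-zero partial isometry in $N$, and Theorem~\ref{t first correspondence between faces and compact partial isometries in the bidual for a surjective isometry} then gives $F_{f(u)}=\{f(u)\}$, equivalently $(1-f(u)f(u)^*)N(1-f(u)^*f(u))=0$. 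Thus $f(u)$ is already an extreme point of $\mathcal{B}_N$; what remains is to force the two defect projections $r:=1-f(u)f(u)^*$ and $s:=1-f(u)^*f(u)$ to vanish.

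For the case $u=1$, I would exploit Proposition~\ref{p algebraic elements}(d): for any non-trivial projection $p\in M$, $f$ coincides on $S((1-p)M(1-p))$ with a surjective real linear isometry $T_p:(1-p)M(1-p)\to (1-f(p)f(p)^*)N(1-f(p)^*f(p))$ whose domain is a JB$^*$-algebra and whose codomain is a JB$^*$-triple; Theorem~\ref{t FerMarPe} then forces $T_p$ to preserve triple products. Combined with Proposition~\ref{p algebraic elements}(b) applied to the orthogonal decomposition $1=p+(1-p)$, which yields $f(1)=f(p)+f(1-p)$ with $f(p)\perp f(1-p)$, and iterating over finer orthogonal projection decompositions $1=\sum_{j=1}^n p_j$ in $M$ producing $f(1)=\sum_j f(p_j)$ with pairwise orthogonal summands in $N$, one computes the resulting source and range projections as $f(1)^*f(1)=\sum_j f(p_j)^*f(p_j)$ and $f(1)f(1)^*=\sum_j f(p_j)f(p_j)^*$; comparing with the constraint that $f(1)$ is a maximal partial isometry and with the triple-product preservation of each $T_{p_j}$ should force $f(1)f(1)^*=f(1)^*f(1)=1_N$, i.e.\ $f(1)\in\mathcal U(N)$.

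Once $f(1)\in\mathcal U(N)$, for a general $u\in\mathcal U(M)$ I would consider the composition $g(x):=f(1)^*f(x)$, which is again a surjective isometry $S(M)\to S(N)$ with $g(1)=1$, since left multiplication by the unitary $f(1)^*$ preserves $S(N)$ isometrically. Repeating the previous analysis on $g$ with the decompositions $u=\sum_j p_j u$ and the orthogonal-sum formulas of Proposition~\ref{p algebraic elements}, one shows that $g(u)$ is both extreme in $\mathcal B_N$ and has full source and range projections, hence $g(u)\in\mathcal U(N)$ and $f(u)=f(1)g(u)\in\mathcal U(N)$.

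The main obstacle I expect is the step $f(1)\in\mathcal U(N)$: in finite von Neumann algebras (Tanaka's setting) every extreme point is automatically unitary, but here one must genuinely exclude non-unitary isometries, coisometries, and more exotic maximal partial isometries whose defect $(1-f(1)f(1)^*)N(1-f(1)^*f(1))=0$ is achieved through orthogonality of central supports rather than through vanishing of the individual projections. Because minimal projections need not exist in $M$ (e.g.\ in type~II$_1$ or type~III factors), the minimal-partial-isometry arguments from Theorem~\ref{t B} cannot be applied directly, so one must bring the ultraproduct refinement Proposition~\ref{p l 3.4 new new quatitative} together with the triple-product preservation of Theorem~\ref{t FerMarPe} to bear on the Peirce structure in order to eliminate both defect projections; after this, the extension to arbitrary $u\in\mathcal U(M)$ is essentially organizational.
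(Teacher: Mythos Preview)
Your opening reduction (that $f(u)$ is a partial isometry with $F_{f(u)}=\{f(u)\}$, hence a maximal partial isometry, so $N_0(f(u))=0$) is correct and matches the paper exactly; the whole problem is indeed to kill the Peirce-1 space $N_1(f(u))$.

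However, the two main maneuvers you propose after this point both have genuine gaps.

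\textbf{The core step is not carried out.} You write that combining orthogonal decompositions $1=\sum_j p_j$ with the triple-product preservation of the $T_{p_j}$ ``should force'' $f(1)f(1)^*=f(1)^*f(1)=1_N$, but you never say how. Knowing $f(1)=\sum_j f(p_j)$ with pairwise orthogonal summands tells you only that the source and range projections of $f(1)$ split accordingly; it does not by itself rule out $N_1(f(1))\neq 0$. The paper's actual mechanism is this: write $u=u_1+u_2$ with $u_1,u_2$ non-zero orthogonal partial isometries (the analogue of your $1=p+(1-p)$); because $u$ is unitary, $M_0(u_2)=u_1u_1^*Mu_1^*u_1=M_2(u_1)$ is a unital JB$^*$-algebra, so Theorem~\ref{t FerMarPe} applies to $T_{u_2}$. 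Now if $w\in N_1(f(u_1))\cap N_0(f(u_2))$ is a non-zero partial isometry, its preimage $e=f^{-1}(w)$ lies in $M_0(u_2)$ by Proposition~\ref{p algebraic elements}, and the triple-product identity
\[
\tfrac12\, T_{u_2}(e)=\tfrac12\, w=\{f(u_1),f(u_1),w\}=\{T_{u_2}(u_1),T_{u_2}(u_1),T_{u_2}(e)\}=T_{u_2}\{u_1,u_1,e\}
\]
forces $e\in M_1(u_1)\cap M_0(u_2)=\{0\}$, a contradiction. Hence $N_1(f(u_1))\cap N_0(f(u_2))=0$, and symmetrically for the other summand of $N_1(f(u))$. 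This is precisely the computation your sketch is missing; the ultraproduct Proposition~\ref{p l 3.4 new new quatitative} plays no role here (it was already consumed in the proof of Theorem~\ref{t A}).

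\textbf{The reduction to $u=1$ is circular as written.} Setting $g=f(1)^*f$ yields $g(1)=1$, but you then need $g(u)\in\mathcal U(N)$ for an arbitrary unitary $u$, which is the original problem again; nothing in ``repeating the previous analysis'' explains why $g(1)=1$ makes that step any easier. A reduction that does work is $h(x):=f(ux)$, for which $h(1)=f(u)$, so proving the claim at the identity for every surjective isometry would suffice. The paper, however, does not bother to reduce: the argument above applies uniformly to any unitary $u$ admitting a non-trivial orthogonal splitting, and the residual case $M=\mathbb C u$ is covered by Tanaka's finite-algebra result.
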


\begin{proof} Let us take $u\in \mathcal{U}(M)$. Theorem \ref{t A} implies that $f(u)$ is a partial isometry in $N$, and furthermore, the spaces $(1 - f(u)f(u)^*)N (1 - f(u)^*f(u))= N_0 (f(u))$ and $(1 - uu^*) M  (1 - u^*u)=M_0(u)=\{0\}$ are isometrically isomorphic as real Banach spaces, thus ${(1 - f(u)f(u)^*) N (1 - f(u)^*f(u))} =\{0\}$, and hence $f(u)$ is a maximal or complete partial isometry in $N$.\smallskip

We know from the above that $N = N_2 (f(u)) \oplus N_1 (f(u))$. If we prove that $N_1 (f(u))=\{0\}$ then $f(u)\in \mathcal{U}(N)$ as desired.\smallskip

If $u$ is a minimal partial isometry, then $M = \mathbb{C} u$ and hence the conclusion trivially follows from Tanaka's results \cite[Theorem 4.12]{Tan2017b} or \cite[Theorem 2.4]{Tan2017}. We can therefore assume the existence of two orthogonal non-zero partial isometries $u_1,u_2$ in $M$ such that $u = u_1 + u_2$. Proposition \ref{p algebraic elements} guarantees that $f(u) = f(u_1) + f(u_2)$, where $f(u_1)$ and $f(u_2)$ are orthogonal partial isometries. It is well known that $$ N_1 (f(u)) = N_1 (f(u_1))\cap N_0 (f(u_2)) \oplus N_0 (f(u_1))\cap N_1 (f(u_2)).$$

For each $j=1,2$, let $T_{u_j}: M_0 (u_j)\to N_0 (f(u_j))$ be the surjective real linear isometry given by Theorem \ref{t A}. It should be remarked that $M_0 (u_j)$ (respectively, $N_0 (f(u_j))$) need not be a von Neumann subalgebra of $M$ (respectively, of $N$), however it is always a JB$^*$-subtriple of $M$ (respectively, of $N$).\smallskip

Take an arbitrary non-zero partial isometry $w$ in $N_1 (f(u_1))\cap N_0 (f(u_2))$. By Theorem \ref{t A}, there exists a non-zero partial isometry $e$ in $M$ such that $f(e) =w$. Since $f(e) =w\in N_0 (f(u_2)),$ an application of Proposition \ref{p algebraic elements} assures that $e\in M_0 (u_2)$. On the other hand, $f(e) =w\in N_1 (f(u_1)) \cap N_0 (f(u_2)),$ $f(u_1)\in N_0 (f(u_2))$ and $T_{u_2}$ is a surjective real linear isometry. Since $M_0(u_2) = M_2(u_1)$ is a JB$^*$-algebra, Theorem \ref{t FerMarPe} guarantees that $T_{u_2}$ is a triple homomorphism. Combining these facts with Proposition \ref{p algebraic elements} we get $$ \frac12 T_{u_2} (e) = \frac12 f(e) = \{f(u_1),f(u_1),f(e)\} $$ $$= \{T_{u_2}(u_1),T_{u_2}(u_1),T_{u_2}(e)\} =T_{u_2}  \{u_1,u_1,e\}, $$ and hence $\{u_1,u_1,e\} = \frac12 e.$ Therefore $e\in M_1 (u_1)\cap M_0(u_2)=\{0\},$ because $u= u_1+u_2$ is a unitary in $M$. This contradicts that $f(e) = w$ is a non-zero partial isometry in $N_1 (f(u_1))\cap N_0 (f(u_2))$.\smallskip

We have shown that every partial isometry in $N_1 (f(u_1))\cap N_0 (f(u_2))$ is zero.
An argument similar to that given in the proof of Proposition \ref{p algebraic elements}$(d)$ assures that the elements in the JBW$^*$-subtriple $N_1 (f(u_1))\cap N_0 (f(u_2))$ which are finite linear combinations of mutually orthogonal partial isometries in $N_1 (f(u_1))\cap N_0 (f(u_2))$ are norm dense in this subtriple (compare  \cite[Theorem 3.2]{Harris81} or \cite[Lemma 3.11]{Horn87}). We can therefore conclude that $N_1 (f(u_1))\cap N_0 (f(u_2))=\{0\}$.\smallskip

Similar arguments to those given in the previous paragraphs show that $$N_0 (f(u_1))\cap N_1 (f(u_2))=\{0\},$$ and thus $N_1 (f(u))=\{0\},$ which concludes the proof. \end{proof}

We can now present a complete solution to Tignley's problem in the case of surjective isometries between the unit spheres of two arbitrary von Neumann algebras. The arguments here make use of a significant recent Theorem due to O. Hatori and L. Moln{\'a}r asserting that every surjective isometry between the unitary groups of two von Neumann algebras extends to a surjective real linear isometry between the algebras \cite[Corollary 3]{HatMol2014}.

\begin{theorem}\label{t Tingley von Neumann} Let $f : S(M)\to S(N)$ be a surjective isometry between the unit spheres of two von Neumann algebras. Then there exists a surjective real linear isometry $T: M\to N$ whose restriction to $S(M)$ is $f$. More precisely, there is a central projection $p$ in $N$ and a Jordan $^*$-isomorphism $J : M\to N$ such that defining $T : M\to N$ by $T(x) = f(1) \left( p J (x) + (1-p) J (x)^*\right)$ {\rm(}$x\in M${\rm)}, then $T$ is a surjective real linear isometry and $T|_{S(M)} = f$.
\end{theorem}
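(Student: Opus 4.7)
The plan is to combine Theorem \ref{t Tanaka unitaries general vN} with the Hatori--Moln\'ar extension theorem for unitary groups \cite[Corollary~3]{HatMol2014} and an approximation argument built on Proposition \ref{p algebraic elements} and the Olsen--Pedersen theory of convex combinations of unitaries. First, Theorem \ref{t Tanaka unitaries general vN} guarantees that $f$ restricts to a surjective isometry $\mathcal{U}(M)\to\mathcal{U}(N)$; applying \cite[Corollary~3]{HatMol2014} together with the Kadison-type structure theorem for real linear surjective isometries between unital C$^*$-algebras produces a central projection $p\in N$, a Jordan $^*$-isomorphism $J\colon M\to N$, and the corresponding surjective real linear isometry $T(x)=f(1)\bigl(pJ(x)+(1-p)J(x)^*\bigr)$ satisfying $T|_{\mathcal{U}(M)}=f|_{\mathcal{U}(M)}$. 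The remaining task is to verify $T(x)=f(x)$ for every $x\in S(M)$.

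Next I would establish $f=T$ on the partial isometries of $M$. For a non-trivial projection $e<1$, the symmetry $s:=2e-1$ is a self-adjoint unitary and $e,\,1-e$ are mutually orthogonal non-zero partial isometries; applying Proposition \ref{p algebraic elements}(b) and Theorem \ref{t C} to the decompositions $1=e+(1-e)$ and $s=e+(-(1-e))$ produces $f(1)=f(e)+f(1-e)$ and $f(s)=f(e)-f(1-e)$, and adding these gives $f(e)=\tfrac12(f(1)+f(s))=\tfrac12(T(1)+T(s))=T(e)$. For an arbitrary non-zero partial isometry $w\in M$ I would first assume the existence of a partial isometry $w'\in M$ orthogonal to $w$ with $(w')^*w'=1-w^*w$ and $w'(w')^*=1-ww^*$; then both $u:=w+w'$ and $u':=w-w'$ are unitaries of $M$, and applying Proposition \ref{p algebraic elements}(b) with Theorem \ref{t C} to the orthogonal decompositions $u=w+w'$ and $u'=w+(-w')$ yields $f(u)=f(w)+f(w')$ and $f(u')=f(w)-f(w')$; comparing these with the linear identities $T(u)=T(w)+T(w')$ and $T(u')=T(w)-T(w')$ together with $f(u)=T(u)$, $f(u')=T(u')$ forces $f(w)=T(w)$. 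In the remaining case --- when the defect projections $1-w^*w$ and $1-ww^*$ are not Murray--von Neumann equivalent in $M$ --- I would split $w=wq+w(1-q)$ along a projection $q\leq w^*w$ chosen so that each summand does admit a unitary completion inside $M$, and conclude via Proposition \ref{p algebraic elements}(b); this splitting is precisely where the Olsen--Pedersen machinery \cite{OlsPed1986} enters.

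Finally, given an arbitrary $x\in S(M)$, take the polar decomposition $x=v|x|$ in $M$ and approximate $|x|$ in norm by finite real linear combinations $\sum_j\lambda_j^{(n)}p_j^{(n)}$ of mutually orthogonal projections in $v^*vMv^*v$, with $\lambda_j^{(n)}\in[0,1]$. After normalizing so that $\max_j\lambda_j^{(n)}=1$, the element $x_n:=\sum_j\lambda_j^{(n)}\,vp_j^{(n)}$ is a finite sum of mutually orthogonal partial isometries of $M$, lies in $S(M)$, and converges in norm to $x$. Proposition \ref{p algebraic elements}(b) combined with the preceding step then yields $f(x_n)=\sum_j\lambda_j^{(n)}f(vp_j^{(n)})=\sum_j\lambda_j^{(n)}T(vp_j^{(n)})=T(x_n)$, and continuity of $f$ and $T$ gives $f(x)=T(x)$.

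The principal obstacle is the partial-isometry step in the case without a direct unitary completion in $M$: verifying that any partial isometry $w\in M$ always admits a decomposition into two orthogonal partial isometries, each extendable to a unitary of $M$, is exactly the point at which the Olsen--Pedersen theory of convex combinations of unitaries in a von Neumann algebra is decisive.
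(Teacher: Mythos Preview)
Your overall strategy --- extract $T$ from Hatori--Moln\'ar on $\mathcal{U}(M)$, then propagate $T=f$ from unitaries to partial isometries via the orthogonal $\pm$-decomposition trick, then to all of $S(M)$ by spectral approximation --- is sound and genuinely different from the paper's route. The paper first reduces to factors (building $T$ directly from a non-trivial central projection via Theorem~\ref{t A} and Proposition~\ref{p algebraic elements}), then splits into cases: finite factors (Tanaka), infinite type~I factors (earlier work), and type~II$_\infty$/III factors. In this last case the paper does \emph{not} pass from unitaries to partial isometries by your $w\pm w'$ trick; instead it first handles \emph{maximal} partial isometries $e$ (say with $e^*e=1$) by halving $1=q_1+q_2$, setting $e_j=eq_j$, and showing via \cite[Theorem~2.2]{Ols1989} that $\operatorname{dist}(e_j,\mathcal{G}(M))=0$. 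Then Olsen--Pedersen \cite{OlsPed1986} is invoked to write each $e_j$ as a \emph{convex combination} of at most three unitaries, which necessarily lie in the face $F_{e_j}$; the affineness of $f|_{F_{e_j}}$ (the last clause of Theorem~\ref{t A}) then transfers $T=f$ from those unitaries to $e_j$, hence to $e$. Only afterwards is your $\pm$ trick used (with $e$ and $\tilde e=2v-e$) to descend from maximal partial isometries to arbitrary ones. So your argument, if completed, would bypass both the factor case analysis and the affineness-on-faces mechanism.

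The genuine gap is your ``remaining case.'' You assert that a partial isometry $w$ with $1-w^*w\not\sim 1-ww^*$ can be written as $wq+w(1-q)$ with each summand unitarily completable in $M$, and you attribute this to Olsen--Pedersen. But \cite{OlsPed1986} is about writing ball elements as \emph{convex combinations} of unitaries; it says nothing about orthogonal decompositions into unitarily-completable pieces, and the paper uses it in exactly that convex-combination sense, not in yours. What you actually need is Murray--von Neumann comparison together with the Halving Lemma: for instance, in a $\sigma$-finite properly infinite factor one checks that if $w$ does not extend to a unitary then $w^*w$ must be properly infinite, halves it as $q_1+q_2$ with $q_i\sim w^*w$, and observes that $1-q_i$ and $1-wq_iw^*$ are then both infinite, hence equivalent. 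That is essentially the paper's halving step repurposed, but you would still have to supply the argument and deal with the non-factor situation (which is precisely why the paper reduces to factors first). As written, the splitting step is unproven and the cited tool does not deliver it.
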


\begin{proof} We claim that we can reduce to the case in which $M$ is a factor. Otherwise, there exists a non-trivial central projection $p\in M$. In this case we can apply a procedure coined in \cite{PeTan16}. Let $T_p: M (1-p) \to (1-f(p)f(p)^*)N (1-f(p)^* f(p))$ and $T_{1-p} : M p \to (1-f(1-p)f(1-p)^*)N (1-f(1-p)^* f(1-p))$ be the surjective real linear isometries isometries given by Theorem \ref{t A}. We define a bounded real linear operator $T: M\to N$ given by $T(x) := T_p ((1-p) x) + T_{1-p} (x p)$ ($x\in M$). Let us take an arbitrary partial isometry $e$ in $M\backslash\{0\}$. The elements $e p $ and $e (1-p)$ are (orthogonal) partial isometries in $Mp $ and $M(1-p)$, respectively. If both are non-zero, then Proposition \ref{p algebraic elements}$(b)$ and $(d)$ implies that $$f(e) = f(e p ) + f(e (1-p)) = T_{1-p} (e p) +  T_p ((1-p) e) = T (e).$$ If $ep =0$ (respectively, $e (1-p)=0$), then $T(e) = T_p ((1-p) e) = f(e (1-p))= f(e)$ (respectively, $T(e) = T_{1-p} (p e) = f(e p) = f(e)$). Therefore $T$ and $f$ coincide on every non-zero partial isometry in $M$. Proposition \ref{p algebraic elements} assures that $T(x) = f(x)$ for every $x\in S(M)$.\smallskip

We therefore assume that $M$ and $N$ are factors. We shall distinguish several cases.\smallskip

Suppose first that $M$ is finite. Let us observe that we are not assuming that $N$ is finite. We recall that a von Neumann algebra $M$ is finite if and only if every extreme point of its closed unit ball is a unitary element (see \cite[Corollary 3]{Kad51} and \cite[Th{\'e}or\`{e}m 1, page 288]{Dix69}). Theorem \ref{t all faces von Neumann}$(b)$ (see \cite{FerGarPeVill17}) implies that $f(\partial_e (\mathcal{B}_M)) = \partial_e (\mathcal{B}_N)$, and Theorem \ref{t Tanaka unitaries general vN} assures that $f( \mathcal{U} (M)) = \mathcal{U} (N)$. Since  $\mathcal{U} (M)= \partial_e (\mathcal{B}_M),$ we conclude that $\partial_e (\mathcal{B}_N) = \mathcal{U} (N)$, and hence $N$ is finite too. Therefore $f:S(M)\to S(N)$ is a surjective isometry between the unit spheres of to finite factors. Theorem 4.12 in \cite{Tan2017b} proves the existence of a surjective real linear isometry $T$ from $M$ to $N$ whose restriction to $S(M)$ is $f$.\smallskip

Suppose next that $M$ is an infinite type I factor, that is, $M =B(H)$ where $H$ is an infinite-dimensional Hilbert space (see \cite[Corollary 5.5.8]{Ped}). We recall that a factor is of type I if and only if it contains a minimal projection. Let $p$ be a minimal projection in $M$. Proposition \ref{p algebraic elements}$(a)$ and $(b)$ guarantees that $f(p)$ is a minimal partial isometry in $N$. Consequently, $N$ contains a minimal projection, and hence $N$ is a type I factor. Therefore $f$ is a surjective isometry between the unit spheres of two type I von Neumann factors. Theorem 3.2 in \cite{FerPe17b} gives the desired conclusion.\smallskip

Finally we assume that $M$ is a type II$_{\infty}$ factor or a type III factor. By Theorem \ref{t Tanaka unitaries general vN}  we have $f( \mathcal{U} (M)) = \mathcal{U} (N)$. In particular $f(1)$ is a unitary in $N$. Therefore the mapping $f|_{ \mathcal{U} (M)} :  \mathcal{U} (M) \to  \mathcal{U} (N)$ is a surjective isometry. By \cite[Corollary 3]{HatMol2014} there is a central projection $p$ in $N$ and a Jordan $^*$-isomorphism $J : M\to N$ such that \begin{equation}\label{eq agree on unitary by HatoriMolnar} f(u) = f(1) \left( p J (u) + (1-p) J (u)^*\right),
\end{equation} for all $u\in \mathcal{U} (M) $. The mapping $T : M\to N$ defined by $$T(x) = f(1) \left( p J (x) + (1-p) J (x)^*\right)\ \  (x\in M)$$ is a surjective real linear isometry and $T(u) =f(u)$ for all $u\in \mathcal{U} (M) $. Since $N$ is factor, we can assume that $p=0$ or $p=1$.\smallskip

We shall prove that $T(x) = f(x)$ for all $x\in S(M)$. We have already commented in several arguments before that every element in $S(M)$ can be approximated by a finite linear combination $\displaystyle \sum_{j=1}^{m} \lambda_j e_j$ of mutually orthogonal (non-zero) partial isometries $e_1,\ldots, e_m$ in $M$ and $\lambda_1=1\geq \lambda_2,\ldots,\lambda_m$ in $\mathbb{R}^+$. Therefore, by Proposition \ref{p algebraic elements}, it will enough to prove that $T(v) = f(v)$ for every non-zero partial isometry $v$ in $M$.\smallskip

Let $e$ be a maximal partial isometry in $M$, that is, an element in $\partial_{e} (\mathcal{B}_M)$. Since $M$ is a factor we may assume that $e^* e =1$. By the Halving lemma (see \cite[Lemma 6.3.3]{KadRingII1986}), there exist orthogonal projections $q_1,q_2$ in $M$ such that $1 = q_1 + q_2$ and $q_1 \sim 1 \sim q_2$, that is, $1,q_1$ and $q_2$ are pairwise Murray-von Neumann equivalent. Let us set $e_1 = e q_1$ and $e_2 = e q_2$. It is easy to check that $e_1$ and $e_2$ are mutually orthogonal partial isometries with $e = e_1 + e_2$. Furthermore, since $e_j^*e_j = q_j$ and $e_j e_j^* = e q_j e^*$, we can deduce that $1\sim q_1\sim e_1 e_1^* \leq 1- e_2 e_2^* \leq 1,$ and $1\sim q_2\sim e_2 e_2^* \leq 1- e_1 e_1^* \leq 1$, and hence $1- e_2 e_2^* \sim 1\sim 1- e_1 e_1^*$ (see \cite[Proposition 6.2.4]{KadRingII1986}). Clearly, $1- e_1^* e_1 = 1-q_1 = q_2 \sim 1$ and $1- e_2^* e_2 = 1-q_2 = q_1 \sim 1.$ Let $\mathcal{G}({M})$ denote the group of invertible elements in $M$. By \cite[Theorem 2.2]{Ols1989} the formula $$\hbox{dist} (a,\mathcal{G}(M)) = \inf \{\lambda: E[0,\lambda]\sim F[0,\lambda]\},$$ holds for each element $a\in M$, where $\{ E[0,\lambda] \}$ and $\{ F[0,\lambda] \}$ are the spectral resolutions of $|a|$ and $|a^*|$, respectively, and $E[0,\lambda]\sim F[0,\lambda]$. Applying the above formula to $e_1$ and $e_2$, we get  $\hbox{dist} (e_1,\mathcal{G}(M)) =\hbox{dist} (e_2,\mathcal{G}(M)) = 0.$ Now, combining Lemma 2.1 and Theorem 2.10 in \cite{OlsPed1986} we deduce that $e_1$ and $e_2$ can be written as a convex combination of at most three unitary elements $u^{j}_1,u^{j}_2,u^{j}_3$ in $M$, that is, $\displaystyle e_j = \sum_{k=1}^{3} t_k u^{j}_k$ with $t_k \in \mathbb{R}^+_0$ and $\displaystyle \sum_{k=1}^{3} t_k=1$  ($j=1,2$).\smallskip

Let $F_{e_j} = e_{j} + (1-e_{j}e_{j}^*) \mathcal{B}_{M} (1-e_{j}^*e_{j})$ denote the norm closed face of $\mathcal{B}_{M}$ containing $e_{j}$. Since $\displaystyle \sum_{k=1}^{3} t_k u^{j}_k=e_{j} \in F_{e_j}$ and the latter is a face of $\mathcal{B}_{M}$, we have $u^{j}_k \in F_{e_j}$ for all $k\in \{1,2,3\}$ with $t_k\neq 0$. By Theorem \ref{t A} the mapping $f|_{F_{e_{j}}}$ is a real affine function, therefore $$f(e_{j}) = \sum_{k=1}^{3} t_k f(u_k) = \hbox{ (by \eqref{eq agree on unitary by HatoriMolnar}) } = \sum_{k=1}^{3} t_k T(u_k)  = T(e_{j}).$$

Now, Proposition \ref{p algebraic elements}$(b)$ and what is proved in the previous paragraph show that $$f(e) = f(e_1) + f(e_2) = T(e_1) +T(e_2) = T(e),$$ for every maximal partial isometry $e$ in $M$.\smallskip

Finally, let $v$ be an arbitrary non-zero partial isometry in $M$. Let us find a maximal partial isometry $e$ satisfying $v\leq e$. We set $\widetilde{e} = v- (e-v)\in \partial_{e} (\mathcal{B}_{M}).$ If we combine Proposition \ref{p algebraic elements}$(b)$, Theorem \ref{t C}, and the previous conclusion we get $$f(v) + f(e-v) = f(e) = T(e) = T(v) + T(e-v),$$ and $$f(v) - f(e-v) = f(\widetilde{e}) = T(\widetilde{e}) = T(v) - T(e-v),$$ and hence $T(v) = f(v)$. We have therefore proved that $T(v) = f(v)$ for every non-zero partial isometry $v$ in $M$, which concludes the proof.
\end{proof}

\begin{corollary}\label{c factors} Let $f : S(M)\to S(N)$ be a surjective isometry between the unit spheres of two von Neumann algebras. Let us assume that $M$ is a factor. Then there exists a Jordan $^*$-isomorphism $J : M\to N$ such that exactly one of the following statements holds:\begin{enumerate}[$(a)$]\item $f(x) = f(1) J (x),$ for all $x\in S(M)$;
\item $f(x) = f(1) J (x^*),$ for all $x\in S(M)$. $\hfill\Box$
\end{enumerate}
\end{corollary}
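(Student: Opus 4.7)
My plan is to extract this statement almost directly from Theorem \ref{t Tingley von Neumann}, the only real content being the observation that, when $M$ is a factor, the central projection appearing there must be trivial. First I would apply Theorem \ref{t Tingley von Neumann} to obtain a central projection $p\in N$ and a Jordan $^*$-isomorphism $J:M\to N$ such that
\[
f(x) = T(x) = f(1)\bigl(pJ(x) + (1-p)J(x)^*\bigr) \qquad (x\in S(M)).
\]
Note that $f(1)\in \mathcal{U}(N)$ by Theorem \ref{t Tanaka unitaries general vN}, and that $J(x)^* = J(x^*)$ for every $x\in M$ since $J$ is a Jordan $^*$-isomorphism.

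Next, I would observe that $N$ is also a factor. Indeed, any Jordan $^*$-isomorphism preserves the center (the center of a C$^*$-algebra is Jordan-algebraically characterised as the set of elements commuting, in the Jordan sense, with every element of the algebra), so $J$ restricts to a $^*$-isomorphism between the centres of $M$ and $N$. Since $M$ is a factor, its centre is $\mathbb{C}\,1$, and hence so is the centre of $N$. Therefore the only central projections of $N$ are $0$ and $1$. Setting $p=1$ in the formula yields $f(x) = f(1)J(x)$ for every $x\in S(M)$, which is alternative $(a)$; setting $p=0$ yields $f(x) = f(1)J(x)^* = f(1)J(x^*)$, which is alternative $(b)$.

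Finally, for the ``exactly one'' clause, I would argue that $(a)$ and $(b)$ cannot hold simultaneously (unless $M$ is trivial, which is excluded). If both did, then for every $x\in S(M)$ we would have $f(1)J(x) = f(1)J(x^*)$; multiplying by $f(1)^*$ on the left and using that $J$ is a bijection would force $x = x^*$ for every $x\in S(M)$. This fails already for $x = i\, 1\in S(M)$, giving a contradiction. I do not foresee any genuine technical obstacle here: the work has already been done in Theorem \ref{t Tingley von Neumann}, and the corollary is essentially a specialisation to the factor case combined with the elementary fact that Jordan $^*$-isomorphisms preserve centres.
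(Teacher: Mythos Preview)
Your proposal is correct and matches the paper's approach: the corollary is stated there without proof (marked with $\Box$), as it follows immediately from Theorem \ref{t Tingley von Neumann} once one notes that $N$ inherits factoriality from $M$ via the Jordan $^*$-isomorphism $J$, forcing $p\in\{0,1\}$. The only superfluous remark is the parenthetical ``unless $M$ is trivial, which is excluded'': your contradiction via $x=i\,1$ already works in every nonzero von Neumann algebra, so no extra hypothesis is needed.
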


\medskip\medskip

\textbf{Acknowledgements} Authors partially supported by the Spanish Ministry of Economy and Competitiveness (MINECO) and European Regional Development Fund project no. MTM2014-58984-P and Junta de Andaluc\'{\i}a grant FQM375.

\end{document}